\newtheorem{theorem}{Theorem}[section]
\newtheorem{lemma}[theorem]{Lemma}
\newtheorem{proposition}[theorem]{Proposition}
\newtheorem{corollary}[theorem]{Corollary}
\theoremstyle{definition}
\newtheorem{definition}[theorem]{Definition}
\newtheorem{example}[theorem]{Example}
\newtheorem{remark}[theorem]{Remark}
\numberwithin{equation}{section}
\def\<{\langle}
\def\>{\rangle}
\long\def\alert#1{\smallskip{\hskip\parindent\vrule%
\vbox{\advance\hsize-2\parindent\hrule\smallskip\parindent.4\parindent%
\narrower\noindent#1\smallskip\hrule}\vrule\hfill}\smallskip}
\begin{document}

\title[\ci\ On Endo-Prime and Endo-Coprime Modules]{On Endo-Prime and Endo-Coprime Modules}

\author{Hojjat Mostafanasab and Ahmad Yousefian Darani }
\address{Department of Mathematics and Applications, University of Mohaghegh Ardabili, P. O. Box 179, Ardabil, Iran}
\email{h.mostafanasab@uma.ac.ir, h.mostafanasab@gmail.com}

\address{Department of Mathematics and Applications, University of Mohaghegh Ardabili, P. O. Box 179, Ardabil, Iran}
 \email{yousefian@uma.ac.ir, youseffian@gmail.com}


\subjclass[2010]{Primary 16D50; Secondary 16D60, 16N60.}

\keywords{endo-prime, endo-coprime, dual modules.}

\begin{abstract}
The aim of this paper is to investigate properties of endo-prime
and endo-coprime modules which are generalizations of prime and
simple rings, respectively. Various properties of endo-coprime
modules are obtained. Duality-like connections are established for
endo-prime and endo-coprime modules.
\end{abstract} 

\maketitle

\section{Introduction and preliminaries}

\noindent Throughout all rings are associative with non-zero
identity elements and modules are unital. We know that $R$ is a
prime ring if and only if any non-zero ideal of $R$ has zero left
annihilator, or stated otherwise, any non-zero fully invariant
submodule of $_{R}R$ is faithful over the endomorphism ring ${\rm
End}_{R}(R)\simeq R$. Haghany and Vedadi \cite{hag} generalized
this property to modules: An $R$-module $M$ with $S={\rm
End}_{R}(M)$ is called {\it endo-prime} if for any non-zero fully
invariant submodule $K$ of $M$, ${\rm Ann}_{S}(K)$=0. They show
that being endo-prime is a Morita invariant property, and an
endo-prime module has a prime endomorphism ring. An $R$-module $M$
is a direct sum of isomorphic simple modules if and only if each
non-zero element of $\sigma[M]$ is an endo-prime module. The dual
notion of endo-prime modules defined by Wijayanti \cite{wij}: An
$R$-module $M$ with $S={\rm End}_{R}(M)$ is called {\it
endo-coprime} if for any proper fully invariant submodule $K$ of
$M$, ${\rm Ann}_{S}(M/K)$=0. In the special case, $_{R}R$ is
endo-coprime if and only if $R$ is a simple ring. It is shown in
\cite{wij} that endo-coprime modules have prime endomorphism
rings.

In section 2, we give sufficient conditions for an endo-prime
module to be fully prime and polyform. Some general properties of
endo-coprime $R$-module $M$ are obtained. We see that for
endo-coprime $fi$-coretractable module $_{R}M$, ${\rm Ann}_{R}(M)$
is a prime ideal of $R$. Now consider comodules over coalgebra $C$
over a commutative ring $R$, provided that the coalgebra $C$
satisfies the $\alpha$-condition. In this context one of the
questions one may ask is when the dual algebra $C^{*}={\rm
Hom}_{R}(C,R)$ of an $R$-coalgebra $C$ is a prime algebra. The
first paper to consider this was by Xu, Lu, and Zhu \cite{xu} who
observed that this is the case if $C$ is a coalgebra over a field
$k$ and $(C^{*}*f)\rightharpoonup C=C$ for any non-zero element
$f\in C^{*}$. Another approach in this direction can be found in
Jara et. al. \cite{jar} and Nekooei-Torkzadeh \cite{nek} where
coprime coalgebras (over fields) are defined by using the wedge
product. We show that a coalgebra $C$ over a base field is fully
coprime if and only if its dual algebra $C^{*}$ is prime. It turns
out that each non-zero element of $\sigma[M]$ is an endo-prime
module if and only if each non-zero element of $\sigma[M]$ is an
endo-coprime module. Being endo-coprime is also a Morita invariant
property.

In section 3, we shall deal with $R$-duals. It is proved that for
a finitely generated module $M$ over a quasi-Frobenius ring $R$,
$M$ is an endo-prime (resp. endo-coprime) $R$-module if and only
if $M^{*}={\rm Hom}_{R}(M,R)$ is an endo-coprime (resp.
endo-prime) $R$-module.

As before, $_{R}M$ is a non-zero left module over the ring $R$,
its endomorphism ring ${\rm End}_{R}(M)$ will act on the right
side of $_{R}M$, in other words, $_{R}M_{{\rm End}_{R}(M)}$ will
be studied mainly. For the convenience of the readers, some
definitions of modules that will be used in the next sections are
provided. Let $M$ be a left $R$-module. We say that $N\in R$-Mod
is {\it subgenerated} by $M$ if $N$ is a submodule of an
$M$-generated module (see the \cite{wis1}). The category of
$M$-subgenerated modules is denoted by $\sigma[M]$. For submodule
$N$ of $M$, we write $N\unlhd M$ when $N$ is an essential
submodule, and $N\ll M$ when $N$ is a superfluous (or small)
submodule of $M$. In the category of left $R$-modules there are
various notions of prime objects which generalize the well-known
notion of a prime associative (commutative) ring $R$. For the
notions of (co)primeness of modules we refer to \cite{wij} and
\cite{wis2}.

\begin{definition}
{\rm Recall that $_{R}M$ is\\
--{\it prime} if for any non-zero (fully invariant) submodule $K$
of $M$, ${\rm Ann}_{R}(K)={\rm Ann}_{R}(M)$.\\
--{\it coprime} if for any proper (fully invariant) submodule $K$
of $M$, ${\rm Ann}_{R}(M/K)={\rm Ann}_{R}(M)$.\\
--{\it fully prime} if for any non-zero fully invariant submodule
$K$ of $M$, $M$ is $K$-cogenerated.\\
--{\it fully coprime} if for any proper fully invariant
submodule $K$ of $M$, $M$ is $M/K$-generated.\\
--{\it strongly coprime} if for any proper fully invariant
submodule $K$ of $M$, $M$ is subgenerated by $M/K$, i.e., $M\in
\sigma[M/K]$.}
\end{definition}

$M$ is called {\it retractable} (resp. $fi$-{\it retractable}) if
for any non-zero submodule (resp. fully invariant submodule) $K$
of $M$, ${\rm Hom}_{R}(M,K)\neq0.$\\
Dually, $M$ is called  {\it coretractable} (resp. $fi$-{\it
coretractable}) if for any proper submodule (resp. fully invariant
submodule) $K$ of $M$, $$\pi_{K}\diamond{\rm
Hom}_{R}(M/K,M)=\{f\in S\mid(K)f=0\}\neq0.$$ We use $\diamond$ for
the composition of mappings written on the right side. The usual
composition is denoted by $\circ$. Thus, from now on, we use
$(u)f\diamond g=g\circ f(u)$.

\section{Endo-Prime and Endo-Coprime Modules}

We begin with investigating the relation between endo-prime and
fully prime modules. For any fully invariant submodules $K,L$ of
$M$, consider the product
$$K\ast_{M}L:=K{\rm Hom}_{R}(M,L).$$ According to
{\rm\cite[1.6.3]{wij}}, $M$ is fully prime if and only if for any
fully invariant submodules $K,L$ of $M$, the relation
$K\ast_{M}L=0$ implies $K=0$ or $L=0$.

\begin{lemma}\label{fp}
Let $_{R}M$ be a $fi$-retractable module and $S={End}_{R}(M)$. If
$S$ is prime, then $M$ is fully prime.
\end{lemma}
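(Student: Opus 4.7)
The plan is to use the characterization from \cite[1.6.3]{wij} mentioned just before the lemma: $M$ is fully prime iff $K\ast_M L=0$ forces $K=0$ or $L=0$ for fully invariant submodules $K,L$. So I would fix fully invariant $K,L\le M$ with $K\ast_M L=K\,{\rm Hom}_R(M,L)=0$ and argue by contradiction, assuming $K\neq 0$ and $L\neq 0$.

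Because $K$ and $L$ are non-zero fully invariant submodules and $M$ is $fi$-retractable, I can pick non-zero elements $f\in{\rm Hom}_R(M,K)\subseteq S$ and $g\in{\rm Hom}_R(M,L)\subseteq S$. The goal is then to deduce that $f\, S\, g=0$ in $S$, so that primeness of $S$ gives the contradiction $f=0$ or $g=0$.

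The key computation is to unpack $f\diamond s\diamond g$ for an arbitrary $s\in S$ using the convention $(m)f\diamond g=g\circ f(m)$: for any $m\in M$, $(m)(f\diamond s\diamond g)=g\bigl(s(f(m))\bigr)$. Since $f(m)\in K$ and $K$ is fully invariant, $s(f(m))\in K$; then the hypothesis $K\cdot{\rm Hom}_R(M,L)=0$, which in right-action notation says $(k)h=h(k)=0$ for all $k\in K$ and $h\in{\rm Hom}_R(M,L)$, forces $g(s(f(m)))=0$. Hence $f\diamond s\diamond g=0$ for every $s\in S$, i.e.\ $fSg=0$, and primeness of $S$ delivers the contradiction.

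The one place that requires genuine care (and is likely the main snag) is keeping the right-action conventions consistent: one must use full invariance of $K$ at precisely the right moment to ensure $s$ maps $f(M)\subseteq K$ back into $K$, so that the annihilation hypothesis on $L$ applies. Everything else — choice of non-zero $f,g$ via $fi$-retractability, and the final application of primeness — is formal once the product calculation is set up correctly.
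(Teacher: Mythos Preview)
Your argument is correct and follows essentially the same route as the paper: from $K\,{\rm Hom}_R(M,L)=0$ and $M\,{\rm Hom}_R(M,K)\subseteq K$ one gets that ${\rm Hom}_R(M,K)\cdot S\cdot{\rm Hom}_R(M,L)$ kills $M$, hence is zero in $S$, and primeness together with $fi$-retractability finishes. The only cosmetic difference is that the paper phrases the computation at the level of the ideals ${\rm Hom}_R(M,K)$ and ${\rm Hom}_R(M,L)$ rather than picking individual witnesses $f,g$ and inserting an arbitrary $s$ in between.
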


\begin{proof}
Let $K,L$ be non-zero fully invariant submodules of $M$ which
satisfy $K\ast_{M}L=K{\rm Hom}_{R}(M,L)=0$. By assumption we have
$$M{\rm Hom}_{R}(M,K){\rm Hom}_{R}(M,L)=0.$$ Since $S$ is prime,
${\rm Hom}_{R}(M,K)=0$ or ${\rm Hom}_{R}(M,L)=0$, a contradiction,
because $M$ is $fi$-retractable. Consequently $M$ is fully prime.
\end{proof}

A submodule $U$ of $R$-module $N$ is called $M$-{\it rational} in
$N$ if for any $U\subseteq V\subseteq N$, ${\rm
Hom}_{R}(V/U,M)=0$. $M$ is called {\it polyform} if any essential
submodule is rational in $M$. The dual notions are: A submodule
$X$ of $N$ is called $M$-{\it corational} in $N$ if for any
$Y\subseteq X\subseteq N$, ${\rm Hom}_{R}(M,X/Y)=0$. $M$ is called
{\it copolyform} if any superfluous submodule is corational
in $M$.\\

An $R$-module $E$ is called {\it pseudo-injective} in $\sigma[M]$
if any diagram in $\sigma[M]$ with exact row

$$\begin{CD}
0 @>>> L @>f>> N\\
@. @VgVV \\
@. E
\end{CD}$$

can be extended nontrivially by some $s\in{\rm End}_{R}(E)$ and
$h:N\rightarrow E$ to the commutative diagram

$$\begin{CD}
0 @>>> L @>f>> N\\
@. @VgVV @VVhV\\
@. E @>>s> E,
\end{CD}$$

that is, $gs=fh\neq0$ (see \cite{cla}).\\

The following result shows sufficient conditions for endo-prime
modules to be fully prime and polyform.
\begin{proposition}\label{pseudo}
Let $M$ be an endo-prime $R$-module with $S=End_{R}(M)$. Then the following statements hold:\\
$(1)$ If $M$ is $fi$-retractable, then $M$ is fully prime.\\
$(2)$ If $M$ is semi-injective, then the center of $S$ is a field.\\
$(3)$ If $M$ is pseudo-injective in $\sigma[M]$ with
$Soc(M)\neq0$, then $M$ is polyform.
\end{proposition}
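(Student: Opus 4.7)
The plan is to dispatch the three parts in turn, using throughout the fact (recorded in the introduction) that an endo-prime module has prime endomorphism ring $S$ and that $\mathrm{Ann}_S(K)=0$ for every non-zero fully invariant submodule $K$ of $M$.

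Part (1) is immediate from Lemma \ref{fp}: endo-primeness gives $S$ prime, and combined with $fi$-retractability this is exactly the hypothesis of Lemma \ref{fp}, which then yields that $M$ is fully prime.

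For (2), I would take an arbitrary non-zero $c\in Z(S)$ and construct a two-sided inverse in $S$. Centrality of $c$ makes $\ker c$ a fully invariant submodule, and the identity $(\ker c)c=0$ places $c$ in $\mathrm{Ann}_S(\ker c)$; since $c\neq 0$, endo-primeness forces $\ker c=0$. Thus $c$ induces an $R$-isomorphism $M\to Mc$, whose inverse $\varphi\colon Mc\to M$ is $R$-linear, and semi-injectivity lets me extend $\varphi$ to some $h\in S$ with $h\circ c=\mathrm{id}_M$. Centrality of $c$ immediately upgrades this to $c\circ h=h\circ c=\mathrm{id}_M$, so $h$ is a two-sided inverse of $c$ in $S$ (and is automatically central). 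Hence the domain $Z(S)$ is a field.

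For (3), I would argue by contradiction: suppose $M$ is not polyform, so there exist a submodule $K\leq M$ and a non-zero $R$-homomorphism $f\colon K\to M$ with $\ker f$ essential in $K$. To apply pseudo-injectivity in a way that essentiality propagates up to $M$, I would choose (by Zorn) a complement $K'$ with $K\oplus K'\unlhd M$ and extend $f$ by zero on $K'$ to $f'\colon K\oplus K'\to M$; a short check shows $\ker f'=\ker f\oplus K'\unlhd K\oplus K'\unlhd M$, while $f'\neq 0$. Pseudo-injectivity of $M$ in $\sigma[M]$, applied to the inclusion $K\oplus K'\hookrightarrow M$ and the map $f'$, then supplies $s,h\in S$ with $h|_{K\oplus K'}=s\circ f'\neq 0$, so in particular $\ker h\supseteq\ker f'\unlhd M$, whence $\ker h\unlhd M$. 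Intersecting with $\mathrm{Soc}(M)$ and using that an essential submodule of a semisimple module is the whole thing forces $\mathrm{Soc}(M)\subseteq\ker h$, so $h\in\mathrm{Ann}_S(\mathrm{Soc}(M))$; but $\mathrm{Soc}(M)$ is a non-zero fully invariant submodule, so endo-primeness gives $h=0$, contradicting $h|_{K\oplus K'}\neq 0$.

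The main obstacle I anticipate is the reduction step in (3): applying pseudo-injectivity to $f$ directly would only tell me that $\ker h\cap K$ is essential in $K$, which is not enough to push $\mathrm{Soc}(M)$ into $\ker h$. Enlarging $K$ to the essential-in-$M$ submodule $K\oplus K'$ and extending $f$ by zero is the key device that makes the kernel essential in $M$ itself, at which point the semisimplicity of $\mathrm{Soc}(M)$ together with endo-primeness closes the argument.
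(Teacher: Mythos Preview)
Parts (1) and (2) coincide with the paper's proof; the only cosmetic difference in (2) is that the paper invokes the semi-injectivity identity $fS={\rm Ann}_{S}({\rm Ker}\,f)$, so ${\rm Ker}\,c=0$ immediately yields $cS=S$, whereas you unpack this as an explicit extension of the inverse $Mc\to M$.

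For (3) your argument is correct but organised differently. The paper works directly with its definition of polyform: endo-primeness gives ${\rm Hom}_{R}(M/{\rm Soc}(M),M)=0$; since every essential $L\unlhd M$ contains ${\rm Soc}(M)$, also ${\rm Hom}_{R}(M/L,M)=0$; then pseudo-injectivity, applied contrapositively to the inclusion $L'\hookrightarrow M$, forces ${\rm Hom}_{R}(L'/L,M)=0$ for all $L\subseteq L'\subseteq M$, i.e.\ $L$ is $M$-rational. You instead start from the equivalent ``no nonzero map $K\to M$ with essential kernel'' characterisation of polyform, which is exactly why you need the complement step $K\mapsto K\oplus K'$ to push essentiality up to $M$; the paper sidesteps this by never leaving submodules that are already essential in $M$. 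Both routes rely on the same three ingredients (endo-primeness at ${\rm Soc}(M)$, the inclusion ${\rm Soc}(M)\subseteq L$ for essential $L$, and one invocation of pseudo-injectivity), so the difference is packaging rather than substance: the paper's version is shorter, while yours makes explicit where the essential-kernel obstruction is killed.
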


\begin{proof}
$(1)$ Since $_{R}M$ is endo-prime, then $S$ is prime.
Hence the assertion follows from Lemma \ref{fp}.\\
$(2)$ Let $f$ be a non-zero central element of $S$. Then ${\rm
Ker}~f$ is a fully invariant submodule of $M$. Because $M$ is
semi-injective, $fS={\rm Ann}_{S}({\rm Ker}~f)$. Now since $M$ is
endo-prime, we must have ${\rm Ker}~f=0$ which implies that
$fS=S$. Consequently $f$ is an invertible element of $S$.\\
$(3)$ Because $M$ is endo-prime, we see that for any non-zero
fully invariant submodule $K$ of $M$, ${\rm Hom}_{R}(M/K,M)=0$.
Since ${\rm Soc}(M)$ is a non-zero fully invariant submodule of
$M$, ${\rm Hom}_{R}(M/{\rm Soc}(M),M)=0$. For any essential
submodule $L\unlhd M$, ${\rm Soc}(M)\subseteq L$. Thus ${\rm
Hom}_{R}(M/L,M)=0$ and by pseudo-injectivity of $M$ in
$\sigma[M]$, ${\rm Hom}_{R}(L'/L,M)=0$ for any $L\subseteq L'$,
i.e., $L$ is $M$-rational.
\end{proof}

\begin{corollary}
The following statements hold on a prime ring $R$:\\
$(1)$ If $_{R}R$ is semi-injective, then the center of $R$ is a field.\\
$(2)$ If $_{R}R$ is pseudo-injective with $Soc(R)\neq0$, then
$_{R}R$ is polyform.
\end{corollary}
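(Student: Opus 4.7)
The plan is to apply Proposition \ref{pseudo} to the regular module $M={}_{R}R$, for which $S={\rm End}_{R}(R)\simeq R$ via $f\mapsto f(1)$. The introduction already observes that $R$ is a prime ring if and only if every non-zero fully invariant submodule of ${}_{R}R$ (i.e., every non-zero two-sided ideal) has zero annihilator in $S\simeq R$, which is precisely the defining condition for ${}_{R}R$ to be endo-prime. So the hypothesis that $R$ is prime gives directly that ${}_{R}R$ is an endo-prime $R$-module, and we may invoke the relevant parts of the proposition.

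For (1), the hypothesis that ${}_{R}R$ is semi-injective lets us apply Proposition \ref{pseudo}(2), yielding that the center of $S$ is a field. The isomorphism $S\simeq R$ is a ring isomorphism, so it restricts to an isomorphism between $Z(S)$ and $Z(R)$; hence $Z(R)$ is a field.

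For (2), one first notes that $\sigma[{}_{R}R]=R\text{-Mod}$, so pseudo-injectivity of ${}_{R}R$ in $\sigma[{}_{R}R]$ is just ordinary pseudo-injectivity as an $R$-module, which is what the hypothesis provides. Together with ${\rm Soc}(R)\neq 0$ and the endo-primeness established above, Proposition \ref{pseudo}(3) applies and yields that ${}_{R}R$ is polyform.

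No significant obstacle arises: the corollary is a direct specialization of the proposition to the regular bimodule, and the only thing one must be careful about is the identification of the endomorphism ring and its center with $R$ and $Z(R)$, and the observation that $\sigma[{}_{R}R]$ is the whole module category so the pseudo-injectivity hypotheses match up without further work.
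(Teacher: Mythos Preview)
Your proposal is correct and follows exactly the approach intended by the paper: the corollary is stated immediately after Proposition~\ref{pseudo} with no proof, since it is a direct specialization to $M={}_{R}R$ using the identification ${\rm End}_{R}(R)\simeq R$ and the fact (noted in the introduction) that $R$ is prime if and only if ${}_{R}R$ is endo-prime.
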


In continuation, we study endo-coprime modules. The following
examples show that the concepts endo-coprime and coprime are
different in general.
\begin{example}{\rm
$(1)$ Let $F$ be a field, $R=\left(\begin{array}{cccc}F & F\\0 &
F\end{array}\right)$, $e=\left(\begin{array}{cccc}1 & 0\\0 &
0\end{array}\right)$, $M=Re$. Then ${\rm End}_{R}(M)\simeq
eRe\simeq F$ as rings, so $_{R}M$ is endo-coprime by
{\cite[1.5.2 part (1)]{wij}}. Now consider two left ideals
$I=\left(\begin{array}{cccc}0 & F\\0 & 0\end{array}\right)$ and
$J=\left(\begin{array}{cccc}0 & F\\0 & F\end{array}\right)$ of
$R$. We can easily see that $I,J\subseteq{\rm Ann}_{R}(M)$ and
$IJ=0$. Thus ${\rm Ann}_{R}(M)$ is not a prime ideal of $R$. Then
$_{R}M$ is not coprime, by {\rm\cite[Lemma 1.3.3 part (1)]{wij}}.\\
$(2)$ Notice that ${\rm
End}_{\mathbb{Z}}(\mathbb{Q})\simeq\mathbb{Q}$ and hence
$\mathbb{Q}$ has no nontrivial fully invariant submodules. Then
$\mathbb{Q}$ is a coprime $\mathbb{Z}$-module. Also any non-zero
factor module $\mathbb{Z}_{p^{\infty}}/K$ of
$\mathbb{Z}_{p^{\infty}}$ (for any prime number $p$) is isomorphic
to $\mathbb{Z}_{p^{\infty}}$ itself. Thus
$\mathbb{Z}_{p^{\infty}}$ is a coprime $\mathbb{Z}$-module.
Therefore $\mathbb{Z}_{p^{\infty}}\oplus\mathbb{Q}$ is a coprime
$\mathbb{Z}$-module (see {\rm\cite[Lemma 1.3.9]{wij}}). But
$\mathbb{Z}_{p^{\infty}}\oplus\mathbb{Q}$ is not endo-coprime,
because ${\rm
Hom}_{\mathbb{Z}}(\mathbb{Z}_{p^{\infty}},\mathbb{Q})=0$ and thus
${\rm End}_{\mathbb{Z}}(\mathbb{Z}_{p^{\infty}}\oplus\mathbb{Q})$
cannot be a prime ring.}
\end{example}

For any fully invariant submodules $K,L\subseteq M$, consider the
inner coproduct
\begin{eqnarray}
 K:_{M}L &=& \bigcap\{(L)f^{-1}\mid f\in{\rm
End}_{R}(M),K\subseteq {\rm Ker}~f\}   \nonumber \\
   &=& {\rm Ker}(\pi_{K}\diamond{\rm Hom}_{R}(M/K,M)\diamond\pi_{L}), \nonumber
\end{eqnarray}
where $\pi_{K}:M\rightarrow M/K$ and  $\pi_{L}:M\rightarrow M/L$
denote the canonical
projections.\\
By {\rm\cite[Proposition 1.7.3]{wij}}, $M$ is fully coprime if and
only if for any fully invariant submodules $K,L$ of $M$, the
relation $K:_{M}L=M$ implies $K=M$ or $L=M$. Notice that such a
coproduct is considered in Bican et.al. \cite{bic} for any pair of
submodules $K, L\subseteq M$ (not necessary fully invariant) and
then a definition of "coprime modules" is derived from this
coproduct.\\

In \cite{wij}, Wijayanti introduced the following condition for an $R$-module $M$:\\
$(\ast\ast)$ For any proper (fully invariant) submodule $K$ of
$M$, ${\rm Ann}_{R}(K)\nsubseteq{\rm Ann}_{R}(M)$.

\begin{proposition}\label{main}
Let $M$ be an $R$-module with $S={End}_{R}(M)$.\\
$(1)$ If $_{R}M$ is coprime and satisfies
$(\ast\ast)$, then it is endo-coprime\\
$(2)$ If $R$ is commutative and $_{R}M$ is endo-coprime,
then $_{R}M$ is coprime.\\
$(3)$ If $_{R}M$ is endo-coprime and fi-coretractable, then
for every left ideal $I$ of $R$ either $IM=0$ or $IM=M$.\\
$(4)$ If $_{R}M$ is fi-coretractable and $S$ is prime, then $M$ is
fully coprime.
\end{proposition}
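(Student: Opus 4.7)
The four parts of the proposition are largely independent, so I will outline each separately. A recurring observation is that, because endomorphisms in $S$ commute with the $R$-action, for any left ideal $I$ of $R$ both $IM$ and $\mathrm{Ann}_{M}(I) = \{m \in M : Im = 0\}$ are fully invariant submodules of $M$; I will invoke this freely.

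Part $(2)$ is direct: when $R$ is commutative, each $r \in R$ induces an endomorphism $\hat{r} \in S$ via $m \mapsto rm$. Given a proper fully invariant $K$ and $r \in \mathrm{Ann}_{R}(M/K)$, one has $M\hat{r} = rM \subseteq K$, so $\hat{r} \in \mathrm{Ann}_{S}(M/K) = 0$, whence $rM = 0$. This gives $\mathrm{Ann}_{R}(M/K) \subseteq \mathrm{Ann}_{R}(M)$, and the reverse is automatic.

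For part $(3)$, suppose $IM \neq M$. Then $IM$ is proper fully invariant, and $fi$-coretractability supplies a non-zero $f \in S$ with $(IM)f = 0$. Rewriting yields $I(Mf) = 0$, so $Mf \subseteq \mathrm{Ann}_{M}(I)$; since $f \neq 0$ forces $Mf \neq 0$, endo-coprimeness rules out $\mathrm{Ann}_{M}(I)$ from being proper, so $\mathrm{Ann}_{M}(I) = M$, i.e., $IM = 0$. Part $(4)$ I handle by contrapositive: for proper fully invariant $K,L$, I must produce $f \in S$ with $Kf = 0$ and $Mf \nsubseteq L$. By $fi$-coretractability pick non-zero $f_{K}, f_{L} \in S$ with $Kf_{K} = Lf_{L} = 0$; primeness of $S$ supplies $g \in S$ with $f_{K}\,g\,f_{L} \neq 0$. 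Setting $f := f_{K} g$, clearly $Kf = 0$, whereas $Mf \subseteq L$ would give $M(f f_{L}) \subseteq L f_{L} = 0$ and hence $f f_{L} = f_{K} g f_{L} = 0$ as an endomorphism, a contradiction.

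Part $(1)$ is the delicate one and I expect it to be the main obstacle. Given a proper fully invariant $K$ and $f \in \mathrm{Ann}_{S}(M/K)$, so $Mf \subseteq K$, I observe that $\mathrm{Ann}_{R}(K)$ annihilates $Mf$, i.e., $(\mathrm{Ann}_{R}(K)\cdot M)f = 0$. The key move is to consider the fully invariant submodule $L := \mathrm{Ann}_{R}(K)\cdot M$ and to force $L = M$, which will immediately give $Mf = 0$. Were $L$ proper, coprimeness applied to $L$ would yield $\mathrm{Ann}_{R}(M/L) = \mathrm{Ann}_{R}(M)$; but by construction $\mathrm{Ann}_{R}(K) \subseteq \mathrm{Ann}_{R}(M/L)$, so chaining these inclusions gives $\mathrm{Ann}_{R}(K) \subseteq \mathrm{Ann}_{R}(M)$, directly contradicting $(\ast\ast)$. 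The harder step is spotting that $L = \mathrm{Ann}_{R}(K)\,M$ is precisely the auxiliary submodule on which the coprime and $(\ast\ast)$ hypotheses can be played off against each other.
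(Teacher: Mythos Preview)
Your proposal is correct in all four parts. Parts $(2)$ and $(4)$ are essentially the paper's arguments: for $(2)$ the paper also uses the multiplication endomorphism $f_r$, and for $(4)$ the paper phrases the same computation at the level of the ideals $\pi_K\diamond\mathrm{Hom}_R(M/K,M)$ and $\pi_L\diamond\mathrm{Hom}_R(M/L,M)$ of $S$, whose product is non-zero by primeness --- your choice of $f_K,g,f_L$ just makes this explicit elementwise.

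Parts $(1)$ and $(3)$ reach the same conclusions by genuinely different auxiliary objects. For $(1)$, the paper works with $\mathrm{Ker}\,f$ rather than your $L=\mathrm{Ann}_R(K)\,M$: assuming $0\neq f\in\mathrm{Ann}_S(M/K)$, coprimeness applied to the proper fully invariant submodule $\mathrm{Ker}\,f$ gives $\mathrm{Ann}_R(M)=\mathrm{Ann}_R(M/\mathrm{Ker}\,f)=\mathrm{Ann}_R((M)f)$, and since $(M)f\subseteq K$ one gets $\mathrm{Ann}_R(K)\subseteq\mathrm{Ann}_R(M)$, contradicting $(\ast\ast)$. This is a line shorter than your route through $L$, but your argument has the pleasant feature that the equality $\mathrm{Ann}_R(K)\,M=M$ is established independently of $f$ and then kills $f$ in one stroke. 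For $(3)$, the paper does not introduce $\mathrm{Ann}_M(I)$; instead, after obtaining $0\neq f$ with $(IM)f=0$ it notes $(M)fS\subsetneq M$, applies $fi$-coretractability a second time to get $0\neq g$ with $fSg=0$, and contradicts primeness of $S$. Your use of endo-coprimeness directly on the fully invariant submodule $\mathrm{Ann}_M(I)$ is arguably cleaner, avoiding the second appeal to coretractability.
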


\begin{proof}
$(1)$ Let $_{R}M$ be coprime and $K$ be a proper fully invariant
submodule of $M$. If there exists $0\neq f\in {\rm
Ann}_{S}(M/{K})$, then we have
$${\rm Ann}_{R}(M)={\rm Ann}_{R}(M/{{\rm Ker}~f})={\rm
Ann}_{R}((M)f),$$ by coprimeness of $M$. Thus ${\rm
Ann}_{R}(K)\subseteq{\rm Ann}_{R}(M)$, a
contradiction since $M$ satisfies $(\ast\ast)$.\\
$(2)$ Let $K$ be a proper fully invariant submodule of $M$ and
$r\in {\rm Ann}_{R}(M/{K})$. Since $R$ is commutative,
multiplication on $M$ by $r$ (say$~f_{r}$) is indeed an
$R$-endomorphism of $M$. Therefore $(M)f_{r}\subseteq K$ and
endo-coprimeness of $M$ implies that $f_{r}=0$. Consequently
$r\in{\rm Ann}_{R}(M)$ as desired.\\
$(3)$ Suppose that $_{R}M$ is endo-coprime and $fi$-coretractable.
Let $0\neq IM\neq M$ for some left ideal $I$ of $R$. Since $_{R}M$
is $fi$-coretractable, there is $0\neq f\in S$ such that
$(IM)f=0$, consequently $I(M)fS=0$ which implies that
$(M)fS\subsetneq M$. Since $_{R}M$ is $fi$-coretractable, there is
$0\neq g\in S$ such that $fSg=0$, contradicting the primeness of
$S$.\\
$(4)$ For proper fully invariant submodules $K,L$ of $M$, by
$fi$-coretractibility we have $\pi_{K}\diamond{\rm
Hom}_{R}(M/K,M)\neq0$, $\pi_{L}\diamond{\rm Hom}_{R}(M/L,M)\neq0$
and
$$\pi_{K}\diamond{\rm Hom}_{R}(M/K,M)\diamond\pi_{L}\diamond{\rm
Hom}_{R}(M/L,M)\neq0,$$ because $S$ is prime. Thus $K:_{M}L\neq
M$. Consequently $M$ is fully coprime.
\end{proof}

As a consequence we obtain:
\begin{corollary}\label{coprime}
For $_{R}M$ suppose that at least one of the following conditions
hold:\\
$(1)$ $_{R}M$ is $fi$-coretractable and satisfies the $(\ast\ast)$ condition.\\
$(2)$ $_{R}M$ satisfies the $(\ast\ast)$ condition and $R$ is
commutative.\\
Then the following statements are equivalent:

$(a)$ $_{R}M$ is coprime.

$(b)$ $_{R}M$ is endo-coprime.

$(c)$ $_{R}M$ is fully coprime.

$(d)$ For every left ideal $I$ of $R$ either $IM=0$ or $IM=M$.

$(e)$ ${Ann}_{R}(M)$ is a prime ideal of $R$.
\end{corollary}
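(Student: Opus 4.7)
The plan is first to observe that condition $(2)$ is essentially covered by condition $(1)$: if $R$ is commutative and $M$ satisfies $(\ast\ast)$, then for every non-zero proper fully invariant submodule $K$ the witness $r\in{\rm Ann}_R(K)\setminus{\rm Ann}_R(M)$ produces, via left multiplication $\mu_r\in S$ (which is $R$-linear because $R$ is commutative), a non-zero element of $\pi_K\diamond{\rm Hom}_R(M/K,M)$. Thus $M$ is automatically $fi$-coretractable, and I may run the entire argument under the common hypothesis ``$fi$-coretractable plus $(\ast\ast)$''.

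\textbf{The cycle $(a)\Rightarrow(b)\Rightarrow(c)\Rightarrow(a)$.} The implication $(a)\Rightarrow(b)$ is exactly Proposition \ref{main}$(1)$, which uses only $(\ast\ast)$. For $(b)\Rightarrow(c)$, I invoke the fact (recorded in the introduction after \cite{wij}) that endo-coprime modules have a prime endomorphism ring; combined with $fi$-coretractability this is the hypothesis of Proposition \ref{main}$(4)$ and delivers that $M$ is fully coprime. For $(c)\Rightarrow(a)$: any surjection $(M/K)^{(\Lambda)}\twoheadrightarrow M$ forces ${\rm Ann}_R(M)\supseteq{\rm Ann}_R(M/K)$, while the reverse inclusion is automatic since $M/K$ is a quotient of $M$, so ${\rm Ann}_R(M/K)={\rm Ann}_R(M)$.

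\textbf{Linking $(d)$ and $(e)$; the main obstacle.} For $(a)\Rightarrow(d)$: for any left ideal $I$, the submodule $IM$ is fully invariant (since $f(IM)=If(M)\subseteq IM$ for every $f\in S$), so if $IM\neq M$ coprimeness forces $I\subseteq{\rm Ann}_R(M/IM)={\rm Ann}_R(M)$, hence $IM=0$. For $(d)\Rightarrow(a)$: taking $I={\rm Ann}_R(M/K)$ yields $IM\subseteq K\subsetneq M$, so $IM=0$ and therefore $I\subseteq{\rm Ann}_R(M)$. The implication $(d)\Rightarrow(e)$ is routine: if $IJ\subseteq{\rm Ann}_R(M)$ for two-sided ideals $I,J$, then $IJM=0$, and the dichotomy $JM\in\{0,M\}$ forces $J\subseteq{\rm Ann}_R(M)$ or $IM=0$. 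The main obstacle is $(e)\Rightarrow(d)$: given a left ideal $I$ with $0\neq IM\neq M$, the submodule $IM$ is a proper non-zero fully invariant submodule of $M$, so $(\ast\ast)$ supplies $r\in{\rm Ann}_R(IM)\setminus{\rm Ann}_R(M)$; using $RI=I$ and $r(IM)=0$ one computes $(RrR)(RIR)\cdot M=0$, so this product of two-sided ideals lies in the prime ideal ${\rm Ann}_R(M)$, whence primeness forces $r\in{\rm Ann}_R(M)$ or $IM=0$---a contradiction in either case. The subtle point is the passage from the witness supplied by $(\ast\ast)$, which a priori lives only in a \emph{left} annihilator, to a product of \emph{two-sided} ideals against which primeness of ${\rm Ann}_R(M)$ can be brought to bear.
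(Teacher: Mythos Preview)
Your proof is correct and tracks the paper's scaffolding closely: both reduce hypothesis $(2)$ to $(1)$ by observing that the multiplication map $\mu_r$ witnesses $fi$-coretractability, and both route $(a)\Rightarrow(b)\Rightarrow(c)$ through parts $(1)$ and $(4)$ of Proposition~\ref{main}. The differences lie in how the remaining implications are closed. The paper invokes Proposition~\ref{main}$(3)$ for $(b)\Rightarrow(d)$ and then cites \cite[Lemma~1.3.3(2)]{wij} for $(e)\Rightarrow(a)$; you instead argue $(a)\Leftrightarrow(d)$ directly from the definition of coprimeness, and replace the external citation by a self-contained proof of $(e)\Rightarrow(d)$ via the two-sided-ideal computation $(RrR)(RIR)\subseteq{\rm Ann}_R(M)$. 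Your route has the merit of making explicit exactly where the $(\ast\ast)$ hypothesis enters the implication out of $(e)$, at the cost of a slightly longer argument than the paper's one-line citation.
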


\begin{proof}
Suppose $_{R}M$ is $fi$-coretractable with $(\ast\ast)$ condition.
Then $(a)\Rightarrow (b)$ by part $(1)$ of Proposition \ref{main}.\\
$(b)\Rightarrow (c)$. Since $S$ is prime, we can apply part (4) of \ref{main}.\\
$(c)\Rightarrow (a)$ is trivial.\\
$(b)\Rightarrow (d)$ by part $(3)$ of Proposition \ref{main}, and
$(d)\Rightarrow (e)$ is clear.\\
$(e)\Rightarrow (a)$ by {\rm\cite[Lemma 1.3.3 part (2)]{wij}}.\\
We now assume $(2)$. It is easy to see that $_{R}M$ is
$fi$-coretractable and so the conditions are equivalent for
$_{R}M$ by the previous case.
\end{proof}
$~~~~~$ Now we offer a short outline of basic facts about
coalgebras and comodules referring to Brzezi\'{n}ski and Wisbauer
\cite{brz} for details, and then we will state Proposition
\ref{main} and Corollary \ref{coprime} for comodules and
coalgebras (as in Jara et. al. \cite{jar}, Nekooei and Torkzadeh \cite{nek}, Wijayanti \cite{wij} and Xu et. al. \cite{xu}).\\

Let $C$ be a coalgebra over a commutative ring $R$ with counit
$\varepsilon:C\rightarrow R$. Its dual algebra is $C^{*}={\rm
Hom}_{R}(C,R)$ with multiplication defined as
$$(f*g)(c)=\mu\circ(f\otimes g)\circ\Delta(c)=\sum f(c_{\underline{1}})g(c_{\underline{2}})$$
for any $c\in C$ and $f,g\in C^{*}$, where $\Delta(c)=\sum
c_{\underline{1}}\otimes c_{\underline{2}}$ (Sweedler's
notation).\\
$~~~~~~$  A right $C$-{\it comodule} is an $R$-module $M$ with an
$R$-linear map $\varrho^{M}:M\rightarrow M\otimes_{R}C$ called a
right $C$-{\it coaction}, with the properties
$$(I_{M}\otimes\Delta)\circ\varrho^{M}=(\varrho^{M}\otimes I_{C})\circ\varrho^{M}~~~~ and~~~~(I_{M}\otimes\varepsilon)\circ\varrho^{M}=I_{M}.$$
$~~~~~$Denote by ${\rm Hom}^{C}(M,N)$ the set of $C$-comodule
morphisms from $M$ to $N$. The class of right comodules over $C$
together with the comodule morphisms form an additive category,
which is denoted by $\mathbf{M}^{C}$.\\
$~~~~~$Similarly to the classical Hom-tensor relations, there are
Hom-tensor relations in $\mathbf{M}^{C}$ (see \cite{brz}). For any
$M\in\mathbf{M}^{C}$ and $X\in\mathbf{M}_{R}$, there is an
$R$-linear isomorphism $$\phi:{\rm
Hom}^{C}(M,X\otimes_{R}C)\rightarrow{\rm
Hom}_{R}(M,X),~~~~f\mapsto(I_{X}\otimes\varepsilon)\circ f,$$ For
$X=R$ and $M=C$ the map $\phi$ yields an algebra
(anti-)isomorphism ${\rm End}^{C}(C)\simeq C^{*}$.\\
$~~~~~$Any $M\in \mathbf{M}^{C}$ is a (unital) left $C^{*}$-module
by $$\rightharpoonup:C^{*}\otimes_{R}M\rightarrow M,~~~~f\otimes
m\mapsto(I_{M}\otimes f)\circ\varrho^{M}(m),$$ and any morphism
$h:M\rightarrow N$ in $\mathbf{M}^{C}$ is a left $C^*$-module
morphism, i.e., $${\rm Hom}^{C}(M,N)\subset{_{C^*}{\rm
Hom}(M,N)}.$$$C$ is a subgenerator in $\mathbf{M}^{C}$, that is
all $C$-comodules are subgenerated by $C$ as $C$-comodules and
$C^{*}$-modules. Thus we have a faithful functor from
$\mathbf{M}^{C}$ to $_{C^*}\mathbf
M$, where the latter denotes the category of left $C^*$-modules.\\
$~~~~~$$C$ satisfies the $\alpha$-{\it condition} if the following
map is injective for every $N\in\mathbf{M}_{R}$,
$$\alpha_{N}:N\otimes_{R}C\rightarrow{\rm Hom}_{R}(C^{*},N),~~~~n\otimes c\mapsto[f\mapsto f(c)n].$$
$~~~~~$$\mathbf{M}^{C}$ is a full subcategory of $_{C^*}\mathbf M$
if and only if $C$ satisfies the $\alpha$-condition, and then
$\mathbf{M}^{C}$ is isomorphic to $\sigma[_{C^*}C]$.\\
$~~~~~${\it Throughout}, $C$ will be an $R$-coalgebra which
satisfies the $\alpha$-condition.\\

For the following definition and more information on the topic,
see \cite{wij}.
\begin{definition}
{\rm Recall that a right $C$-comodule $M$ with $S={\rm End}_{C}(M)$ is\\
--{\it coprime, fully coprime and strongly
coprime} if $M$ is coprime, fully coprime and strongly coprime as a left $C^{*}$-module, respectively.\\
--{\it endo-coprime} if for any proper fully invariant subcomodule
$K$ of $M$, ${\rm Ann}_{S}(M/K)=0$.}
\end{definition}

\begin{proposition}\label{coco}
Let $M$ be a right $C$-comodule such that at least one of the following conditions hold on $M$:\\
$(1)$ $M$ is $fi$-coretractable and satisfies the $(\ast\ast)$ condition as a left $C^*$-module.\\
$(2)$ $M$ satisfies the $(\ast\ast)$ condition and $C^*$ is a commutative algebra.\\
Then the following statements are equivalent:

$(a)$ $M$ is a coprime comodule.

$(b)$ $M$ is a endo-coprime comodule.

$(c)$ $M$ is a fully coprime comodule.

$(d)$ For every left ideal $I$ of $C^{*}$ either $I\rightharpoonup
M=0$ or $I\rightharpoonup M=M$.

$(e)$ ${Ann}_{C^*}(M)$ is a prime ideal of $C^*$.
\end{proposition}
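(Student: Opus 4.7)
The plan is to reduce Proposition \ref{coco} to the already-proved Corollary \ref{coprime}, exploiting the fact that $C$ satisfies the $\alpha$-condition. Under this hypothesis $\mathbf{M}^{C}$ is a full subcategory of ${}_{C^{*}}\mathbf{M}$ isomorphic to $\sigma[{}_{C^{*}}C]$, so that for every right $C$-comodule $M$ one has ${\rm Hom}^{C}(M,N)={}_{C^{*}}{\rm Hom}(M,N)$ and, in particular, $S={\rm End}^{C}(M)={\rm End}_{C^{*}}(M)$. Moreover the subcomodules of $M$ coincide with its $C^{*}$-submodules, and likewise for fully invariant submodules, so the lattice-theoretic data used in each of (a)--(e) are intrinsic to $M$ as a left $C^{*}$-module.

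With this identification in place I would first translate every clause of the proposition into $C^{*}$-module language. The notions of coprime, fully coprime, strongly coprime comodule are defined that way in the paper; endo-coprimeness of $M$ as a comodule coincides with endo-coprimeness of $M$ as a $C^{*}$-module because the endomorphism rings agree; and clauses (d), (e) are literally statements about left ideals of $C^{*}$ acting on $M$ (where the action $I\rightharpoonup M$ of a left ideal $I\subseteq C^{*}$ is just $IM$ viewed in ${}_{C^{*}}\mathbf{M}$). The hypotheses in (1) and (2) are also phrased in $C^{*}$-module terms (for (2) note that $C^{*}$ commutative plays the role of the commutative base ring $R$ in Corollary \ref{coprime}).

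Once everything has been reformulated, I would simply invoke Corollary \ref{coprime} with $R$ replaced by $C^{*}$ and with the $C^{*}$-module ${}_{C^{*}}M$ in place of ${}_{R}M$. This yields the equivalences (a)$\Leftrightarrow$(b)$\Leftrightarrow$(c)$\Leftrightarrow$(d)$\Leftrightarrow$(e) immediately under either hypothesis (1) or (2).

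The only real point of care, and thus the main potential obstacle, is to verify that the translation is lossless: concretely, that ``fully invariant subcomodule'' and ``fully invariant $C^{*}$-submodule'' of $M$ literally coincide (they do, since ${\rm End}^{C}(M)={\rm End}_{C^{*}}(M)$ acts identically in both categories), and that the canonical projections and Hom-sets entering the definition of $\pi_{K}\diamond{\rm Hom}(M/K,M)$ used in the $fi$-coretractability and fully coprimeness conditions are the same whether one works in $\mathbf{M}^{C}$ or in ${}_{C^{*}}\mathbf{M}$. Both facts are built into the $\alpha$-condition. After recording these identifications as a short preliminary paragraph, the proof itself reduces to the single sentence ``apply Corollary \ref{coprime} to the left $C^{*}$-module $M$.''
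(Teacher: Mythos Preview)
Your proposal is correct and follows exactly the paper's approach: the paper's entire proof is the single line ``By Corollary \ref{coprime},'' relying implicitly on the standing $\alpha$-condition to identify the comodule-theoretic data with the $C^{*}$-module-theoretic data. You have simply spelled out the translation that the paper leaves to the reader.
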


\begin{proof}
By Corollary \ref{coprime}.
\end{proof}

For $M=C$, the assertions in \ref{coco} yield the following
theorem that is a part of the main result 2.11.7 of \cite{wij},
that concluded from {\rm\cite[1.7.11]{wij}}. But {\rm\cite[1.7.11
part (i)]{wij}} has an incorrect proof, because in its proof for
finitely generated right ideals $I,J\subseteq{\rm End}_{R}(M)$,
submodules $K={\rm Ker}~I$ and $L={\rm Ker}~J$ are not fully
invariant. If we consider $I$ and $J$ as finitely generated left
or two sided ideals, then we cannot apply {\rm\cite[1.1.9 part
(2)]{wij}}.
\begin{theorem}
If $C$ is a coalgebra over a field $k$, then the following
statements are equivalent:\\
$(a)$ $C$ is coprime as a right $C$-comodule.\\
$(b)$ $C$ is coprime as a left $C$-comodule.\\
$(c)$ $C$ is endo-coprime as a right $C$-comodule.\\
$(d)$ $C$ is fully coprime as a right $C$-comodule.\\
$(e)$ For every left ideal $I$ of $C^{*}$ either $I\rightharpoonup
C=0$ or $I\rightharpoonup C=C$.\\
$(f)$ $C^*$ is a prime algebra.
\end{theorem}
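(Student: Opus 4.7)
The plan is to apply Proposition~\ref{coco} to $M=C$ viewed as a right $C$-comodule, after verifying its hypotheses, and then to recover statement (b) by left--right symmetry.

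The first step is to record two basic identifications that are valid because the base is a field. Since $C$ then satisfies the $\alpha$-condition automatically, the category $\mathbf{M}^{C}$ embeds as a full subcategory of $_{C^{*}}\mathbf{M}$, and there is a canonical anti-isomorphism $C^{*}\simeq{\rm End}^{C}(C)$, $f\mapsto(c\mapsto f\rightharpoonup c)$. Applying the counit $\varepsilon$ to $f\rightharpoonup c=\sum c_{\underline{1}}f(c_{\underline{2}})$ and using $\sum\varepsilon(c_{\underline{1}})c_{\underline{2}}=c$ yields $f(c)$, so $f\rightharpoonup C=0$ forces $f=0$; hence the left $C^{*}$-action on $C$ is faithful and ${\rm Ann}_{C^{*}}(C)=0$. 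Consequently the ``prime annihilator'' clause (e) of Proposition~\ref{coco} collapses to ``$(0)$ is a prime ideal of $C^{*}$,'' which is precisely statement (f) of the theorem.

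The next step is to verify condition~(1) of Proposition~\ref{coco}: namely that $C$ is $fi$-coretractable as a left $C^{*}$-module and satisfies $(\ast\ast)$. Under the anti-isomorphism above, the fully invariant subcomodules of the right $C$-comodule $C$ coincide with the right coideals, because any right coideal $K$ is automatically stable under $f\rightharpoonup-$ for every $f\in C^{*}$. With ${\rm Ann}_{C^{*}}(C)=0$ in hand, both hypotheses now reduce to a single statement: for every proper right coideal $K\subset C$, ${\rm Ann}_{C^{*}}(K)\neq 0$. Establishing this non-vanishing is where I expect the main obstacle to lie; the cleanest route is via the standard correspondence between (sub)coalgebra structure on $C$ and closed two-sided ideals in $C^{*}$ that is available over a field (see \cite{brz}), which produces the required non-zero annihilator directly.

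Once condition~(1) has been checked, Proposition~\ref{coco} applied to $M=C$ yields at once the equivalences (a)$\Leftrightarrow$(c)$\Leftrightarrow$(d)$\Leftrightarrow$(e)$\Leftrightarrow$(f). It remains to fold in statement (b): for this I would rerun exactly the same argument with $C$ regarded as a \emph{left} $C$-comodule, noting that the relevant endomorphism ring is again (opposite to) $C^{*}$ and that the property ``$C^{*}$ is a prime algebra'' is manifestly left--right symmetric. The symmetric version therefore gives (b)$\Leftrightarrow$(f), closing the cycle of equivalences.
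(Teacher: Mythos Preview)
Your strategy coincides with the paper's: verify the hypotheses of Proposition~\ref{coco} for $M=C$, use faithfulness of the $C^{*}$-action to identify the ``prime annihilator'' clause with ``$C^{*}$ is prime'', and then fold in the left-comodule statement (b). Your observation that, once ${\rm Ann}_{C^{*}}(C)=0$ is known, $fi$-coretractability and condition $(\ast\ast)$ collapse to the single assertion ``${\rm Ann}_{C^{*}}(K)\neq 0$ for every proper right coideal $K$'' is correct and mildly streamlines the paper's treatment, which handles the two hypotheses separately: it notes that over a field $C$ is a self-cogenerator (hence $fi$-coretractable) and cites \cite[Lemma~2.2.9]{wij} for $(\ast\ast)$.

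One caution on the step you flag as the main obstacle: the ``subcoalgebra $\leftrightarrow$ closed two-sided ideal'' correspondence you invoke does not by itself produce a nonzero annihilator for an arbitrary proper \emph{right coideal} $K$, since $K$ need not be a subcoalgebra and $K^{\perp}$ is only a one-sided ideal. The clean argument is the one implicit in the paper: over a field $C$ is an (injective) cogenerator in $\mathbf{M}^{C}$, so the nonzero comodule $C/K$ admits a nonzero morphism into $C$; under ${\rm End}^{C}(C)\simeq C^{*}$ this is exactly a nonzero element of ${\rm Ann}_{C^{*}}(K)$. For (b), the paper quotes \cite[2.6.3(i)]{wij} (coprime as left $C$-comodule $\Leftrightarrow$ endo-coprime) instead of your left--right symmetry argument; both routes close the cycle via (f).
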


\begin{proof}
Over a field, $C$ is self-cogenerator and so $fi$-coretractable.
According to {\rm\cite[Lemma 2.2.9]{wij}}, $C$ satisfies condition
$(**)$ as a left $C^*$-module. Also by {\rm\cite[2.6.3 part
(i)]{wij}}, $C$ is coprime as a left $C$-comodule if and only if
$C$ is endo-coprime. On the other hand $C$ is a faithful
$C^{*}$-module (see {\rm\cite[4.6 part (2)]{brz}}).
\end{proof}

The following Proposition shows that a self-injective coprime
module (in the sense of Bican et.al. \cite{bic}) is endo-coprime.
\begin{proposition}
Let $_{R}M$ be self-injective. If every non-zero factor module of
$M$ generates $M$, then ${End}_{R}(M)$ is a prime ring and $_{R}M$
is endo-coprime.
\end{proposition}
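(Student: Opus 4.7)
The plan is to establish primeness of $S := {\rm End}_R(M)$ first, and then derive the endo-coprime property as a consequence. The key tool in both steps is the hypothesis that every non-zero factor module of $M$ generates $M$, used for primeness in conjunction with the self-injectivity of $M$ to lift morphisms from submodules.

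For primeness, I would take $a,b \in S$ with $aSb=0$ and $b\neq 0$, and aim to show $a=0$. Since $b\neq 0$, the submodule ${\rm Ker}\,b$ is proper, so $M/{\rm Ker}\,b$ is a non-zero factor module and hence generates $M$ by hypothesis. This produces morphisms $g_i : M/{\rm Ker}\,b \to M$ with $M = \sum_i {\rm Im}(g_i)$. Self-injectivity enters here: letting $\overline b : M/{\rm Ker}\,b \to {\rm Im}(b)$ denote the canonical isomorphism induced by $b$, the composite $g_i \circ \overline b^{-1} : {\rm Im}(b) \to M$ extends to some $\tilde g_i \in S$, and a short verification gives $\tilde g_i \circ b = g_i \circ \pi_{{\rm Ker}\,b}$. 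Consequently ${\rm Im}(g_i) = \tilde g_i(b(M))$, yielding the key identity $M = \sum_i \tilde g_i(b(M))$. Every $m\in M$ is therefore a finite sum of elements $\tilde g_i(b(m_i))$; applying $a$ together with $aSb=0$ kills each summand and forces $a(M)=0$, so $a=0$.

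For endo-coprimeness, I would take a proper fully invariant submodule $K$ and $f\in S$ with $f(M)\subseteq K$, aiming to show $f=0$. Consider the set $I := \{h\in S : h(K)=0\} = \pi_K \diamond {\rm Hom}_R(M/K,M)$. Full invariance of $K$ ensures $I$ is a two-sided ideal of $S$, and $I\neq 0$ because the non-zero quotient $M/K$ generates $M$ (forcing ${\rm Hom}_R(M/K,M)\neq 0$). A direct check using $f(M)\subseteq K$ and full invariance of $K$ shows $I\cdot SfS = 0$; primeness of $S$ together with $I\neq 0$ then forces $SfS=0$, i.e., $f=0$.

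The main obstacle is step one, specifically the self-injectivity upgrade from the bare generation statement $M=\sum_i {\rm Im}(g_i)$ to the factored form $M=\sum_i \tilde g_i(b(M))$; without this factorization the hypothesis $aSb=0$ cannot be harvested to annihilate each summand of $M$. Once primeness is in hand, the endo-coprime step is essentially bookkeeping, provided the two-sided ideal structure of $I$ is recognized via the fully invariant hypothesis on $K$.
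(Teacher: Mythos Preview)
Your proof is correct and follows essentially the same route as the paper. The paper packages the primeness step functorially---applying ${\rm Hom}_R(-,M)$ to the epimorphism $(M/{\rm Ker}\,I)^{(\Lambda)}\twoheadrightarrow M$ and invoking \cite[28.1]{wis1} to identify $\pi_{{\rm Ker}\,I}\diamond{\rm Hom}_R(M/{\rm Ker}\,I,M)$ with $I=fS$, thereby embedding $S\hookrightarrow I^{\Lambda}$---whereas you perform the same self-injective lifting by hand via the $\tilde g_i$; for endo-coprimeness the paper just records coretractability and cites \cite[1.5.2(iii)]{wij}, whose content is exactly the ideal computation $I\cdot Sf=0$ that you spell out.
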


\begin{proof}
Let $f\in S:={\rm End}_{R}(M)$ and $I:=fS$. By assumption $M$ is
generated by $M/{\rm Ker}~I$, i.e., there is a short exact
sequence ${(M/{\rm Ker}~I)}^{(\Lambda)}\longrightarrow
M\longrightarrow0$. Applying ${\rm Hom}_{R}(-,M)$ to this exact
sequence yields the commutative diagram

$$\begin{CD}
0 @>>> {\rm Hom}_{R}(M,M) @>>> {\rm Hom}_{R}({(M/{\rm Ker}~I)}^{(\Lambda)},M) \\
@. @VV=V @VV{\simeq}V \\
0 @>>> S @>{\phantom{\text{ very long label}}}>> {I}^{\Lambda},
\end{CD}$$

since ${\rm Hom}_{R}({(M/{\rm Ker}~I)}^{(\Lambda)},M)\simeq{\rm
Hom}_{R}(M/{\rm Ker}~ I,M)^{\Lambda}$ and, by injectivity of $M$,
$\pi_{{\rm Ker}~I}\diamond{\rm Hom}_{R}(M/{\rm Ker}~I,M)=I$
(see{\rm\cite[28.1 part (4)]{wis1}}). Thus $I$ is a faithful right
$S$-module. Consequently ${\rm End}_{R}(M)$ is a prime ring.\\
By assumption, ${\rm Tr}(M/K,M)=M$ for all proper submodule
$K\subset M$. Thus ${\rm Hom}_{R}(M/K,M)\neq 0$, i.e., $M$ is
coretractable. Then by {\rm\cite[1.5.2 part (iii)]{wij}}, $M$ is
endo-coprime.
\end{proof}

Let $N$ be a fully invariant submodule of $M$ and $f\in{\rm
End}_{R}(M)$, then we define $\overline{f}\in{\rm End}_{R}(M/N)$
with $(m+N)\overline{f}=(m)f+N$ (or
$f\pi_{N}=\pi_{N}\overline{f}$, where $\pi_{N}$ is the canonical
projection $M\rightarrow M/N$).\\

We shall need the following Proposition that is a slight
modification of 1.5.3 of \cite{wij}.
\begin{lemma}\label{map}
Let $M$ be an $R$-module and $N$ be a proper fully invariant
submodule of $M$. Consider the ring homomorphism $\psi:{
End}_{R}(M)\rightarrow {End}_{R}(M/N)$ with
$f\mapsto\psi(f):=\overline{f}$, where
$\overline{f}$ defined in the above statement.\\
$(1)$ If $M$ is endo-coprime or $M$ is copolyform and $N\ll M$,
then $\psi$ is injective.\\
$(2)$ If $M$ is self-projective, then $\psi$ is surjective.
\end{lemma}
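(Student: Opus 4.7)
The plan is to begin with a clean description of $\mathrm{Ker}(\psi)$. Unpacking the definition, $\psi(f)=0$ says $(m)f+N=N$ for every $m\in M$, which is precisely $(M)f\subseteq N$. Equivalently, viewing $M/N$ as a right $S$-module (well-defined because $N$ is fully invariant), we have $\mathrm{Ker}(\psi)=\mathrm{Ann}_{S}(M/N)$.

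For part $(1)$, I would then handle the two hypotheses separately. If $M$ is endo-coprime, then since $N$ is a proper fully invariant submodule, $\mathrm{Ann}_{S}(M/N)=0$ by definition, and injectivity of $\psi$ is immediate. If instead $M$ is copolyform with $N\ll M$, then $N$ is corational in $M$; taking $Y=0$ in the definition of corationality yields $\mathrm{Hom}_{R}(M,N)=0$. Any $f\in\mathrm{Ker}(\psi)$ has image inside $N$, so it factors as $f=g\diamond\iota$ with $\iota\colon N\hookrightarrow M$ the inclusion and $g\in\mathrm{Hom}_{R}(M,N)$; hence $g=0$ and therefore $f=0$.

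For part $(2)$, given $\overline{g}\in\mathrm{End}_{R}(M/N)$, I would lift the map $\pi_{N}\diamond\overline{g}\colon M\to M/N$ through the epimorphism $\pi_{N}\colon M\to M/N$ using self-projectivity of $M$, obtaining $f\in S$ with $f\diamond\pi_{N}=\pi_{N}\diamond\overline{g}$. Comparing this with the defining relation $f\diamond\pi_{N}=\pi_{N}\diamond\overline{f}$ and using surjectivity of $\pi_{N}$ (evaluate at any $\overline{m}\in M/N$ by choosing a preimage) yields $\overline{f}=\overline{g}$, so $\psi(f)=\overline{g}$.

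The main obstacle is essentially notational: bookkeeping the right-action/$\diamond$-convention consistently so that the factorization in the copolyform case and the lifting through $\pi_{N}$ in the self-projective case are written in the correct order. Once the framework is in place, each of the three cases collapses to a one-step invocation of the relevant hypothesis—endo-coprimeness, corationality of a superfluous submodule, or the universal property of self-projectivity against the canonical epimorphism $\pi_{N}$.
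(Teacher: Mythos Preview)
Your proposal is correct and follows essentially the same approach as the paper. The paper only spells out the copolyform case (deferring the endo-coprime case and part~(2) to \cite[1.5.3]{wij}), and there it applies corationality to the superfluous submodule $(M)f\subseteq N$ rather than to $N$ itself; your variant, applying corationality directly to $N$ and then factoring $f$ through the inclusion $N\hookrightarrow M$, is an equally valid and equally short route to the same conclusion.
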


\begin{proof}
We only need to show that if $M$ is copolyform and $N\ll M$, then
$\psi$ is injective, the other statements are proved in
{\rm\cite[1.5.3]{wij}}. Suppose that $\psi(f)=\overline{f}=0$.
Then $(M)f\subseteq N$, so $(M)f\ll M$. Since $M$ is copolyform,
$(M)f$ is $M$-corational. Consequently $f\in{\rm
Hom}_{R}(M,(M)f)=0$.
\end{proof}

Further properties of endo-coprime modules are collected in the
following.
\begin{proposition}\label{Dedekind}
Let $_{R}M$ be an endo-coprime module. Then the following statements hold:\\
$(1)$ $M$ connot have a nontrivial fully invariant submodule which
is a direct summand. Consequently $R/{Ann}_{R}(M)$ has
no nontrivial central idempotents.\\
$(2)$ If $M=K\oplus L$ for non-zero submodules $K$ and $L$ of $M$,
then ${Hom}_{R}(K,L)\neq0$.\\
$(3)$ If $M$ is coretractable, then any proper fully invariant
submodule $N\subset M$ is superfluous and $M/N$ is
coretractable.\\
$(4)$ For any proper fully invariant submodule $N\subset M$, $M/N$
is endo-coprime
provided that $M$ is self-projective\\
$(5)$ $M$ is Dedekind finite if and only if there exists a proper
fully invariant submodule $N\subset M$ for which $M/N$ is
Dedekind finite.\\
$(6)$ If $M$ is semi-projective, then the center of
$S={End}_{R}(M)$ is a field.
\end{proposition}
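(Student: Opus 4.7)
The plan is to handle the six parts in turn, each reducing to the defining equation ${\rm Ann}_S(M/K) = 0$ for a suitable proper fully invariant $K$. For (1), any nontrivial fully invariant direct summand $N$ of $M$ would yield an idempotent projection $\pi \in S$ onto $N$; its image lies in $N$, so $\pi \in {\rm Ann}_S(M/N) = 0$, forcing $N = 0$. The consequence about $R/{\rm Ann}_R(M)$ follows because a nontrivial central idempotent $\bar e$ produces the decomposition $M = eM \oplus (1-e)M$ in which $eM$ is $S$-invariant (every $R$-endomorphism commutes with multiplication by $e$) and both summands are nonzero and proper. Part (2) is then an immediate consequence: if ${\rm Hom}_R(K,L) = 0$ in a decomposition $M = K \oplus L$, then composing any $s \in S$ with the projection $M \to L$ vanishes on $K$, so $(K)s \subseteq K$ for all $s$, making $K$ a forbidden nontrivial fully invariant direct summand.

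For (3), to establish $N \ll M$ I would assume $N + K = M$ with $K \subsetneq M$ and use coretractability to produce a nonzero $g \colon M/K \to M$; the composite $h := \pi_K \diamond g \in S$ kills $K$, so $(M)h = (N + K)h = (N)h \subseteq N$ by full invariance of $N$, contradicting ${\rm Ann}_S(M/N) = 0$. Coretractability of $M/N$ is handled analogously: given $N \subseteq L \subsetneq M$, choose a nonzero $g \colon M/L \to M$ from coretractability of $M$; if $\pi_N \circ g$ were zero, the image of $g$ would lie in $N$ and $\pi_L \diamond g$ would be a nonzero element of ${\rm Ann}_S(M/N)$. For (4), given a proper fully invariant $K/N \subset M/N$ one verifies routinely that $K$ itself is fully invariant and proper in $M$, and Lemma \ref{map}(2) lifts any $\bar f \in {\rm End}_R(M/N)$ annihilating $(M/N)/(K/N)$ to some $f \in S$ with $(M)f \subseteq K$; endo-coprimeness of $M$ then forces $f = 0$ and hence $\bar f = 0$. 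Part (5) is a direct application of Lemma \ref{map}(1): the forward direction is immediate by taking $N = 0$ (proper since $M \neq 0$), while the converse transfers Dedekind finiteness from ${\rm End}_R(M/N)$ back to $S$ through the injective ring homomorphism $\psi$.

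Part (6) is the most substantive. Let $0 \neq f \in Z(S)$; centrality ensures $(M)f$ is fully invariant, since $((M)f)s = ((M)s)f \subseteq (M)f$ for every $s \in S$. Semi-projectivity of $M$ yields the identification $\{h \in S : (M)h \subseteq (M)f\} = Sf$, which equals $fS$ by centrality; hence ${\rm Ann}_S(M/(M)f) = fS$. If $(M)f$ were proper in $M$, endo-coprimeness would force $fS = 0$, contradicting $f \neq 0$; hence $(M)f = M$ and $fS = S$, so there exists $g \in S$ with $fg = 1_M$, and centrality gives $gf = fg = 1_M$ as well. Thus every nonzero central element of $S$ is invertible, so $Z(S)$ is a field.

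The main obstacle lies in part (3), where the key move is recognizing that the image of the composite $\pi_K \diamond g$ automatically lands in $N$ once $N + K = M$; this exploits full invariance of $N$ in a slightly subtle way, via the rewriting $(M)h = (N)h$. Part (6) is also delicate because the passage from one-sided to two-sided invertibility of $f$ rests essentially on centrality: semi-projectivity and endo-coprimeness together only produce surjectivity of $f$ and a one-sided inverse.
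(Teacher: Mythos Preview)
Your proposal is correct and follows essentially the same line as the paper's proof in all six parts. The only cosmetic differences are that in (1) you argue directly with the projection idempotent rather than routing through primeness of $S$ and an external citation, and in (6) you invoke semi-projectivity a step earlier to identify ${\rm Ann}_S(M/(M)f)=Sf$ before concluding, whereas the paper first uses the trivial membership $f\in{\rm Ann}_S(M/(M)f)$ to force $(M)f=M$ and only then applies semi-projectivity to obtain $Sf=S$.
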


\begin{proof}
$(1)$ Since $M$ is endo-coprime, then $S={\rm End}_{R}(M)$ is a
prime ring. Similar to the proof of {\rm\cite[Proposition 1.9 part
(1)]{hag}}, $M$ cannot have a fully invariant submodule which is a
direct summand. Let $e$ be a nontrivial central idempotent element
of $R/{\rm Ann}_{R}(M)$. Thus $M$ can be decomposed to
$M=eM\oplus(1-e)M$. Then $eM=0$ or $(1-e)M=0$. Consequently $e=0$
or $e=1$, since $M$ is a faithful $R/{\rm Ann}_{R}(M)$-module.\\
$(2)$ Assume that $M=K\oplus L$ for non-zero submodules $K$ and
$L$ of $M$ such that ${\rm Hom}_{R}(K,L)=0$. Then ${\rm
End}_{R}(M)=\left[\begin{array}{cccc}{\rm End}_{R}(K) & {\rm
Hom}_{R}(L,K)\\0 & {\rm End}_{R}(L)\end{array}\right]$. It follows
that $[0~~~L]{\rm End}_{R}(M)\subseteq[0~~~L]$. Thus $(0\oplus L)$
is a fully invariant submodule of $M$. By part (1), $K=0$ or $L=0$, a contradiction.\\
$(3)$ Suppose $N$ is not superfluous in $M$ and $N+K=M$ for some
proper submodule $K$. Since $M$ is coretractable, there exists a
non-zero $f\in S={\rm End}_{R}(M)$ such that $(K)f=0$. Then
$(M)f\subseteq N$, which contradicts the assumption that $M$ is
endo-coprime. Now to show that $M/N$ is coretractable, let $K/N$
be a proper submodule of $M/N$. Then there exists $0\neq f\in
S={\rm End}_{R}(M)$ with $(K)f=0$. Now define the map $g\in{\rm
End}_{R}(M/N)$ by $(m+N)g=(m)f+N$. If $g=0$, then $(M)f\subseteq
N$, which contradicts the assumption that $M$ is endo-coprime.
On the other hand $(K/N)g=0$. Consequently $M/N$ is coretractable.\\
$(4)$ Let $K/N$ be a proper fully invariant submodule of $M/N$.
Then by {\rm\cite[Corollary 1.1.21]{wij}}, $K$ is a proper fully
invariant submodule of $M$. Suppose there exists
$\overline{f}\in{\rm End}_{R}(M/N)$ with
$(M/N)\overline{f}\subseteq{K/N}$. By Lemma \ref{map} there is
$f\in{\rm End}_{R}(M)$ with $f\pi_{N}=\pi_{N}\overline{f}$. Thus
$(M)f\subseteq K$, which implies $f=0$, because $M$ is
endo-coprime. Consequently
$\overline{f}=0$.\\
$(5)$ The necessity is clear. Conversely, let $N\subset M$ be a
proper fully invariant submodule which $M/N$ is Dedekind finite.
Since $M$ is endo-coprime, ${\rm End}_{R}(M)$ is isomorphic to a
subring of ${\rm End}_{R}(M/N)$, by Lemma \ref{map}. The remainder
of proof is similar to {\rm\cite[Proposition 1.9 part (4)]{hag}}.\\
$(6)$ Suppose that $f$ is a non-zero central element of $S$. Then
$(M)f$ is a fully invariant submodule of $M$. If $(M)f\neq M$,
then $f\in{\rm Ann}_{S}(M/(M)f)=0$, which contradicts our
assumption. Thus $(M)f=M$, and because $M$ is semi-projective, we
have $S=Sf$ which means that $f$ is invertible.
\end{proof}

\begin{corollary}
Let $_{R}M$ be self-projective copolyform with $N$ a superfluous
fully invariant submodule. Then $_{R}M$ is endo-coprime if and
only if so is $M/N$.
\end{corollary}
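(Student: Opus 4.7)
The plan is to reduce everything to the fact that, under the hypotheses, the ring homomorphism $\psi:\mathrm{End}_R(M)\to\mathrm{End}_R(M/N)$ of Lemma \ref{map} is in fact a ring isomorphism: it is injective because $M$ is copolyform and $N\ll M$, and it is surjective because $M$ is self-projective. So the two endomorphism rings coincide up to isomorphism, and the question becomes whether proper fully invariant submodules match up correctly across the quotient.

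The forward direction is immediate from the results already established: $N$ is a superfluous fully invariant submodule of the non-zero module $M$, hence proper, so Proposition \ref{Dedekind}(4) applies and $M/N$ is endo-coprime.

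For the converse, assume $M/N$ is endo-coprime and take a proper fully invariant submodule $K\subset M$ together with some $f\in\mathrm{End}_R(M)$ satisfying $(M)f\subseteq K$; the goal is to show $f=0$. First I would check that $(K+N)/N$ is a proper fully invariant submodule of $M/N$. Properness follows from $K\subsetneq M$ combined with $N\ll M$, since otherwise $K+N=M$ would force $K=M$. Full invariance is where surjectivity of $\psi$ enters: given $\bar g\in\mathrm{End}_R(M/N)$, lift it to $g\in\mathrm{End}_R(M)$ with $\psi(g)=\bar g$; since $K+N$ is fully invariant in $M$ (being a sum of two fully invariant submodules), we get $(K+N)g\subseteq K+N$, and passing to the quotient yields $((K+N)/N)\bar g\subseteq (K+N)/N$.

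With that in place, $(M/N)\bar f = ((M)f+N)/N \subseteq (K+N)/N$, so endo-coprimeness of $M/N$ forces $\bar f=\psi(f)=0$, and then injectivity of $\psi$ gives $f=0$, as desired. The only step requiring any real thought is the verification that $(K+N)/N$ is fully invariant in $M/N$; this is the point at which self-projectivity is genuinely used (to lift endomorphisms), while copolyformity together with $N\ll M$ is needed only to run the injectivity of $\psi$ in the last line.
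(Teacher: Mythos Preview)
Your proof is correct and follows essentially the same route as the paper: both directions use Lemma \ref{map} (injectivity from copolyform plus $N\ll M$, surjectivity from self-projectivity), and the converse proceeds by showing $(K+N)/N$ is a proper fully invariant submodule of $M/N$ so that endo-coprimeness kills $\overline{f}$ and hence $f$. The only cosmetic differences are that the paper cites \cite[Lemma 1.1.20(ii)]{wij} for the full invariance of $(K+N)/N$ where you argue it directly via lifting, and the paper's forward citation to Proposition \ref{Dedekind}(3) appears to be a misprint for part (4), which is what you correctly invoke.
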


\begin{proof}
$(\Rightarrow)$. By part $(3)$ of Proposition \ref{Dedekind}.\\
$(\Leftarrow)$. Let $M/N$ be endo-coprime and $K$ be a proper
fully invariant submodule of $M$, with $(M)f\subseteq K$ for some
$f\in{\rm End}_{R}(M)$. Then $K+N\neq M$, and $(K+N)/N$ is fully
invariant in $M/N$ by {\rm\cite[Lemma 1.1.20 part (ii)]{wij}}.  We
have $(M)f\subseteq K+N$, hence
$(M/N)\overline{f}\subseteq(K+N)/N$ for $\overline{f}$ which
defined in prior of Lemma \ref{map}. Since $M/N$ is an
endo-coprime module we must have $\overline{f}=0$. But then again
by Lemma \ref{map} we deduce that $f=0$.
\end{proof}

\begin{remark}{\rm
A direct sum of endo-coprime modules need not be endo-coprime. To
see this we recall that the endomorphism rings of the quasi-cyclic
group ${\mathbb Z}_{p^{\infty}}$ and the group of $p$-adic
integers ${{\mathbb Q}_{p}}^*$ are isomorphic commutative domains.
On the other hand ${\mathbb Z}_{p^{\infty}}$ is a self-cogenerator
${\mathbb Z}$-module, so by {\rm\cite[1.5.2 part (iii)]{wij}},
${\mathbb Z}_{p^{\infty}}$ is endo-coprime. But ${\rm
End}_{\mathbb Z}({\mathbb Q}/{\mathbb
Z})\simeq\prod_{p}\mathbb{Q}_{p}^*$ is not a prime ring, thus
${\mathbb Q}/{\mathbb Z}$ is not an endo-coprime $\mathbb
Z$-module.}
\end{remark}

A non-zero module is called {\it endo-simple} if it has no
nontrivial fully invariant submodules.
\begin{lemma}\label{sc}
Let $_{R}M$ be a projective module in $\sigma[M]$. Then the
following statements are equivalent:\\
$(a)$ $M$ is fully coprime.\\
$(b)$ $M$ is strongly coprime.\\
$(c)$ $M$ is endo-simple.
\end{lemma}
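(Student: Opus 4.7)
My plan is to establish the cycle $(c) \Rightarrow (a) \Rightarrow (b) \Rightarrow (c)$, with the projectivity hypothesis entering only in the last implication.

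The implications $(c) \Rightarrow (a)$ and $(c) \Rightarrow (b)$ are vacuously true: if $M$ is endo-simple, then $0$ is the only proper fully invariant submodule, and the conditions ``$M$ is $M$-generated'' and ``$M\in\sigma[M]$'' hold trivially. The implication $(a)\Rightarrow(b)$ is immediate from the definitions, since any $M/K$-generated module is in particular a quotient of $(M/K)^{(\Lambda)}$ and hence lies in $\sigma[M/K]$.

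For the main step $(b)\Rightarrow(c)$, let $K$ be a proper fully invariant submodule of $M$; I would like to show $K=0$. By strong coprimeness, $M\in\sigma[M/K]$, so $M$ embeds in some $M/K$-generated module $X$, and fixing an epimorphism $\pi:(M/K)^{(\Lambda)}\twoheadrightarrow X$, the pullback $Y:=\pi^{-1}(M)$ satisfies $Y\subseteq (M/K)^{(\Lambda)}$ together with an epimorphism $Y\twoheadrightarrow M$. Because $\sigma[M]$ is closed under direct sums and submodules, $Y\in\sigma[M]$, so projectivity of $M$ in $\sigma[M]$ splits this epimorphism and produces an embedding $\iota:M\hookrightarrow (M/K)^{(\Lambda)}$ with components $\iota_\lambda:M\to M/K$.

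It remains to show each $\iota_\lambda$ vanishes on $K$. Self-projectivity of $M$ (a consequence of projectivity in $\sigma[M]$) allows us to lift $\iota_\lambda$ along the canonical projection $p:M\to M/K$ to a map $\widetilde{\iota_\lambda}\in{\rm End}_R(M)$; since $K$ is fully invariant, $\widetilde{\iota_\lambda}(K)\subseteq K$, hence $\iota_\lambda(K)=p(\widetilde{\iota_\lambda}(K))=0$. Consequently $\iota(K)=0$, and the injectivity of $\iota$ forces $K=0$. The main obstacle here is constructing the embedding $\iota$ by splitting the pulled-back epimorphism within $\sigma[M]$ rather than merely inside $R$-Mod; once $\iota$ is in hand, full invariance of $K$ combined with self-projectivity finishes the argument cleanly.
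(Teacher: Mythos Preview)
Your proof is correct. The paper, by contrast, does not argue directly: for $(a)\Leftrightarrow(c)$ it simply cites \cite[Corollary 4.6]{rag}, and for $(b)\Leftrightarrow(c)$ it invokes \cite[Lemma 2.8]{vio}, which asserts that when $M$ is projective in $\sigma[M]$ one has $M\notin\sigma[M/U]$ for every non-zero fully invariant submodule $U$. Your argument for $(b)\Rightarrow(c)$ is in effect a self-contained proof of that lemma: you build the embedding $\iota:M\hookrightarrow (M/K)^{(\Lambda)}$ by splitting the pulled-back epimorphism inside $\sigma[M]$, and then combine self-projectivity with full invariance of $K$ to force $\iota(K)=0$. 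This makes the role of projectivity in $\sigma[M]$ completely explicit, whereas the paper hides it inside the cited references. Your cycle $(c)\Rightarrow(a)\Rightarrow(b)\Rightarrow(c)$ also has the minor advantage that it bypasses the need for the separate equivalence $(a)\Leftrightarrow(c)$ altogether, since $(a)\Rightarrow(b)$ is immediate from the definitions.
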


\begin{proof}
$(a)\Leftrightarrow(c)$. See {\rm\cite[Corollary 4.6]{rag}}, but
notice that two concepts coprime modules in \cite{rag}
and fully coprime modules in \cite{wij} are coincide.\\
$(b)\Leftrightarrow(c)$. Assume that $M$ is projective in
$\sigma[M]$. Then for any non-zero fully invariant submodule $U$
of $M$, $M\notin\sigma[M/U]$ (see {\rm\cite[Lemma 2.8]{vio}}). In
this case if $M$ is also strongly coprime, then $M$ has no
nontrivial fully invariant submodules.
\end{proof}

The following result generalizes {\rm\cite[1.7.4]{wij}}.
\begin{corollary}
The following statements are equivalent for the ring $R$:\\
$(a)$ $_{R}R$ is coprime.\\
$(b)$ $_{R}R$ is endo-coprime.\\
$(c)$ $_{R}R$ is  fully coprime.\\
$(d)$ $_{R}R$ is strongly coprime.\\
$(e)$ $R$ is a simple ring.
\end{corollary}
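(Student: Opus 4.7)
The plan is to reduce the chain of equivalences to one invocation of Lemma~\ref{sc}, which handles $(c)$, $(d)$ and $(e)$ simultaneously, together with two elementary annihilator computations that settle $(a)\Leftrightarrow(e)$ and $(b)\Leftrightarrow(e)$; both computations are trivialised by the presence of the identity $1\in R$.

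First I would observe that $_{R}R$ is projective in $\sigma[R]=R$-Mod, so Lemma~\ref{sc} applies with $M=R$. The key preliminary remark is that a submodule $I$ of $_{R}R$ is fully invariant precisely when $I$ is a two-sided ideal of $R$: every $f\in S={\rm End}_{R}(R)$ is right multiplication by some element of $R$, so the full-invariance condition $If\subseteq I$ for all $f\in S$ unpacks to $Ir\subseteq I$ for all $r\in R$. Hence $_{R}R$ is endo-simple in the sense preceding Lemma~\ref{sc} if and only if $R$ has no nontrivial two-sided ideals, which is exactly $(e)$. Lemma~\ref{sc} then yields $(c)\Leftrightarrow(d)\Leftrightarrow(e)$.

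For $(a)\Leftrightarrow(e)$, I would compute directly: since $1\in R$, for any two-sided ideal $K\subsetneq R$ one has ${\rm Ann}_{R}(R/K)=\{r\in R:rR\subseteq K\}=K$ and ${\rm Ann}_{R}(R)=0$. Hence $_{R}R$ is coprime exactly when every proper two-sided ideal is zero, i.e., $R$ is simple. The argument for $(b)\Leftrightarrow(e)$ is parallel, using that $S\simeq R^{\rm op}$ acts by right multiplication on $R$ so that ${\rm Ann}_{S}(R/K)$ again corresponds to $K$; this equivalence is in any case noted in the introduction. No serious obstacle is anticipated: the only step requiring any thought is the identification of fully invariant submodules of $_{R}R$ with two-sided ideals, which must be verified before invoking Lemma~\ref{sc}, but is immediate once the right action of $S$ is made explicit.
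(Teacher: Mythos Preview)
Your proposal is correct and matches the paper's intent: the corollary is stated immediately after Lemma~\ref{sc} with no explicit proof, so the implicit argument is precisely to apply Lemma~\ref{sc} to $M={}_{R}R$ (giving $(c)\Leftrightarrow(d)\Leftrightarrow(e)$ via the identification of fully invariant submodules with two-sided ideals) and to handle $(a)$ and $(b)$ by the elementary annihilator computations you describe, the equivalence $(b)\Leftrightarrow(e)$ being already noted in the introduction.
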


\begin{corollary}
Let $_{R}M$ be projective in $\sigma[M]$. If $M$ is strongly
coprime or fully coprime, then $M$ is copolyform.
\end{corollary}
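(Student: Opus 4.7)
By Lemma \ref{sc}, ${}_RM$ projective in $\sigma[M]$ together with (strongly or fully) coprime forces $M$ to be endo-simple: the only fully invariant submodules of $M$ are $0$ and $M$. My plan is to derive copolyformity from this fact plus the projectivity hypothesis.

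To verify copolyformity, fix $X\ll M$ and $Y\subseteq X$; the task is to show ${\rm Hom}_R(M,X/Y)=0$. Both $X$ and $X/Y$ lie in $\sigma[M]$, so projectivity of $M$ in $\sigma[M]$ lets every $f\colon M\to X/Y$ lift through the epimorphism $X\twoheadrightarrow X/Y$ to some $g\colon M\to X$, reducing the problem to showing ${\rm Hom}_R(M,X)=0$ for every $X\ll M$. Assume for contradiction that a non-zero $g\in{\rm Hom}_R(M,X)$ exists; view $g$ as an element of $S={\rm End}_R(M)$ with $(M)g\subseteq X$. A short verification shows $(X)s\ll M$ for every $s\in S$: from $(X)s+K=M$ pull back through $s$ to obtain $X+s^{-1}(K)=M$, whence $s^{-1}(K)=M$ by $X\ll M$, giving $K\supseteq(M)s\supseteq(X)s$ and $K=M$. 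Hence each $((M)g)s$ is superfluous in $M$, so the fully invariant hull $L:=(M)gS=\sum_{s\in S}((M)g)s$ is a sum of superfluous submodules; being fully invariant, endo-simplicity gives $L\in\{0,M\}$, and $g\neq 0$ forces $L=M$.

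The most delicate step is deriving a contradiction from $L=M$. My plan is to apply projectivity once more: the summation epimorphism $\bigoplus_{s\in S}((M)g)s\twoheadrightarrow M$ lies in $\sigma[M]$ and therefore splits, realizing $M$ as a direct summand of a direct sum of superfluous submodules of $M$. Equivalently, the two-sided ideal $\Delta:=\{h\in S:(M)h\ll M\}$ contains $g$ and satisfies $(M)\Delta=M$, while ${\rm id}_M\notin\Delta$ (otherwise $M\ll M$ and $M=0$). Using the splitting to express ${\rm id}_M$ as an $S$-combination with coefficients in $\Delta$, one forces $M$ itself to lie in a finite sum of superfluous submodules of $M$, hence $M\ll M$ and $M=0$, contradicting $g\neq 0$. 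The main obstacle is this last step: the splitting is only pointwise finite, and combining it with endo-simplicity to produce a uniform identity decomposition of ${\rm id}_M$ in terms of $\Delta$ is the heart of the argument.
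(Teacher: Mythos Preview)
Your reduction via Lemma \ref{sc} to endo-simplicity is exactly what the paper does, and your lifting argument reducing copolyformity to ${\rm Hom}_R(M,X)=0$ for $X\ll M$ is fine. The gap is precisely where you flag it: from the split epimorphism $\bigoplus_{s\in S}((M)g)s\twoheadrightarrow M$ you only get, for each $m\in M$, a \emph{pointwise} finite expression $m=\sum_{s\in F_m}x_s$ with $x_s\in((M)g)s$. The component maps $M\to((M)g)s$ of the splitting do lie in $\Delta$, but there is no reason only finitely many of them are nonzero, so you cannot write ${\rm id}_M$ as a finite $S$-combination of elements of $\Delta$, and hence cannot force $M$ to sit inside a finite sum of superfluous submodules. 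Endo-simplicity alone does not manufacture such a uniform finite decomposition, so the ``heart of the argument'' as you describe it does not go through.

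The repair is much shorter than your setup and is essentially what the paper invokes via \cite[1.9.15]{wij}. Every superfluous submodule of $M$ lies in ${\rm Rad}(M)$, so your $L=(M)gS\subseteq{\rm Rad}(M)$; but ${\rm Rad}(M)$ is fully invariant, and a nonzero module projective in $\sigma[M]$ always has a maximal submodule (see \cite[22.3]{wis1}), so ${\rm Rad}(M)\neq M$. Endo-simplicity then forces ${\rm Rad}(M)=0$, hence $M$ has no nonzero superfluous submodules at all, and copolyformity is vacuous. Note that once you say it this way, the entire detour through $g$, $\Delta$, and the splitting is unnecessary: endo-simple plus projective in $\sigma[M]$ immediately gives ${\rm Rad}(M)=0$.
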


\begin{proof}
By Lemma \ref{sc} and {\rm\cite[1.9.15]{wij}}.
\end{proof}

Endo-coprimeness is preserved under isomorphism. We further have:
\begin{proposition}
Being endo-coprime is a Morita invariant property.
\end{proposition}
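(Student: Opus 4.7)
The plan is to use the standard characterization of Morita equivalence as a (covariant, additive) equivalence of module categories $F\colon R\text{-Mod}\to R'\text{-Mod}$ and show that the defining property of endo-coprimeness is preserved by any such equivalence. First I would fix an $R$-module $M$ with $S={\rm End}_{R}(M)$ endo-coprime and set $M'=F(M)$, $S'={\rm End}_{R'}(M')$. The equivalence $F$ induces a ring isomorphism $\Phi\colon S\to S'$ via $f\mapsto F(f)$, which will be the bridge used throughout the argument.

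Next I would show that $F$ sets up a lattice isomorphism between the fully invariant submodules of $M$ and those of $M'$. This uses two standard facts: (i) any category equivalence induces a lattice isomorphism between submodules of $M$ and submodules of $M'$ (since subobjects are detected by monomorphisms), and (ii) a submodule $K\le M$ is fully invariant iff it is stable under $S$, and under the correspondence a submodule $K'\le M'$ is stable under $S'$ iff the corresponding $K\le M$ is stable under $S$ (because $\Phi$ is a ring isomorphism and the actions are identified by $F$). In particular, every proper fully invariant submodule of $M'$ is of the form $F(K)$ for a proper fully invariant $K\le M$, and $F$ being exact gives $M'/F(K)\cong F(M/K)$.

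With these identifications in place, the verification is essentially a transport of structure. Given a proper fully invariant $K'=F(K)\le M'$, suppose $g\in S'$ satisfies $g\in{\rm Ann}_{S'}(M'/K')$, i.e., $(M')g\subseteq K'$. Writing $g=\Phi(f)=F(f)$ for a unique $f\in S$, the fact that $F$ is fully faithful and reflects the image of a morphism allows one to conclude $(M)f\subseteq K$, hence $f\in{\rm Ann}_{S}(M/K)=0$ by endo-coprimeness of $M$, so $g=0$. Thus $M'$ is endo-coprime. Symmetry (apply the inverse equivalence) gives the full Morita invariance.

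The main obstacle is the bookkeeping in the second step: one must verify carefully that the correspondence between submodules respects the action of the endomorphism rings, i.e., that if $K\subseteq M$ corresponds to $K'\subseteq M'$ under $F$, then $(K)f\subseteq K$ iff $(K')\Phi(f)\subseteq K'$. This is essentially a diagram chase using the naturality of the equivalence on inclusions $K\hookrightarrow M$ and the endomorphisms of $M$, but it is the step where one is tempted to appeal to "clear" without argument; once it is granted, the preservation of both the fully-invariant condition and the vanishing of ${\rm Ann}_{S}(M/K)$ follow immediately from the isomorphism $\Phi$ and the exactness of $F$.
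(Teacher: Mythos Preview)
Your argument is correct and follows essentially the same route as the paper: transport a proper fully invariant submodule and an endomorphism of $F(M)$ back through the equivalence, invoke endo-coprimeness of $M$, and conclude the endomorphism is zero. The paper carries this out by applying the inverse equivalence $\beta$ directly to the inclusion $N\hookrightarrow (M)\alpha$ and to the offending $h$, then asserts (without further justification) that the image is a proper fully invariant submodule of $(M)\alpha\beta\cong M$; your version is more explicit in setting up the ring isomorphism $\Phi\colon S\to S'$ and the lattice correspondence of fully invariant submodules, which is exactly the ``bookkeeping'' the paper leaves implicit.
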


\begin{proof}
Assume that $A$ and $B$ are Morita equivalent rings with inverse
category equivalences $\alpha:A$-${\rm Mod}\rightarrow B$-${\rm
Mod}$ and $\beta:B$-${\rm Mod}\rightarrow A$-${\rm Mod}$. Let $M$
be an endo-coprime $A$-module, we want to show that $(M)\alpha$ is
an endo-coprime $B$-module. Suppose that $N$ is a proper fully
invariant submodule of $(M)\alpha$ and $h\in{\rm
End}_{B}((M)\alpha)$ such that $((M)\alpha)h\subseteq N$. Let $i$
denotes the inclusion map from $N$ to $(M)\alpha$, $g:=(i)\beta$
and $N':=(N)\beta$. We have
$(M)\alpha\beta(h)\beta\subseteq(N')g$. Since $(N')g$ is a proper
fully invariant submodule of $(M)\alpha\beta$, by the
endo-coprimeness of the latter, we deduce that $(h)\beta=0$, and
consequently $h=0$. This shows that $(M)\alpha$ is an endo-coprime
$B$-module.
\end{proof}

The following proposition is a generalization of
{\rm\cite[1.15]{hag}}.
\begin{proposition}\label{semisimple}
The following statements are equivalent on a module $_{R}M$:\\
$(a)$ $_{R}M$ is homogeneous semisimple.\\
$(b)$ Each non-zero element of $\sigma[M]$ is an endo-prime
module.\\
$(c)$ Each non-zero element of $\sigma[M]$ is an endo-coprime
module.\\
$(d)$ Each non-zero element of $\sigma[M]$ is an endo-simple
module.
\end{proposition}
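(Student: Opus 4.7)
The strategy is to close the cycle $(a)\Rightarrow(d)\Rightarrow(b),(c)\Rightarrow(a)$; the equivalence $(a)\Leftrightarrow(b)$ is already contained in {\rm\cite[1.15]{hag}}, so the genuinely new content is $(c)\Rightarrow(a)$. For $(a)\Rightarrow(d)$, write $M=S^{(I)}$ with $S$ simple; then every $N\in\sigma[M]=\sigma[S]$ has the form $N=S^{(J)}$. To show $N$ is endo-simple, pick a non-zero $x$ in a fully invariant submodule $L$, extract a non-zero component $x_{j_{0}}\in S_{j_{0}}$ via the coordinate projection in ${\rm End}_{R}(N)$, and then use the ``transfer'' endomorphisms $S_{j_{0}}\to S_{j}$ (built from an isomorphism of simples) to place a non-zero, hence generating, element of each $S_{j}$ inside $L$, forcing $L=N$. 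The implications $(d)\Rightarrow(b)$ and $(d)\Rightarrow(c)$ are immediate, since the endo-prime and endo-coprime conditions are vacuous in the absence of proper non-zero fully invariant submodules.

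For $(c)\Rightarrow(a)$, I would lean on Proposition \ref{Dedekind}$(1)$, which forbids an endo-coprime module from having a nontrivial fully invariant direct summand. Step~1 (uniqueness of the simple): if $S_{1}\not\simeq S_{2}$ are simples in $\sigma[M]$, then ${\rm Hom}(S_{1},S_{2})={\rm Hom}(S_{2},S_{1})=0$ by Schur, so $S_{1}$ is a nontrivial fully invariant direct summand of $S_{1}\oplus S_{2}\in\sigma[M]$, contradicting Proposition \ref{Dedekind}$(1)$. Step~2 (non-zero socle everywhere): for every non-zero $N\in\sigma[M]$, ${\rm Soc}(N)\neq 0$. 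Otherwise, pick a cyclic submodule $N'=Rx\subseteq N$; since $N'$ is cyclic it admits a maximal submodule by Zorn, whose simple quotient $S_{0}\in\sigma[M]$ satisfies ${\rm Hom}(S_{0},N')=0$ (no simple embeds in $N'$ because ${\rm Soc}(N')\subseteq{\rm Soc}(N)=0$). Hence $S_{0}$ is a nontrivial fully invariant direct summand of $N'\oplus S_{0}\in\sigma[M]$, again contradicting Proposition \ref{Dedekind}$(1)$.

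Step~3 (semisimplicity of $M$): assume for contradiction that ${\rm Soc}(M)\subsetneq M$. Applying Step~2 to $M/{\rm Soc}(M)$ produces a simple subobject, which lifts to $y\in M\setminus{\rm Soc}(M)$ with $(Ry+{\rm Soc}(M))/{\rm Soc}(M)\simeq S$. Set $N=Ry\in\sigma[M]$. Then ${\rm Soc}(N)=N\cap{\rm Soc}(M)$ (simple submodules of $N$ are simple submodules of $M$), $N/{\rm Soc}(N)\simeq S\neq 0$ so ${\rm Soc}(N)$ is proper in $N$, and ${\rm Soc}(N)\neq 0$ by Step~2. Since by Step~1 every simple in $\sigma[M]$ is isomorphic to $S$, there is an embedding $S\hookrightarrow{\rm Soc}(N)$. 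Composing
\[
N\twoheadrightarrow N/{\rm Soc}(N)\simeq S\hookrightarrow{\rm Soc}(N)\hookrightarrow N
\]
produces a non-zero endomorphism of $N$ whose image lies in ${\rm Soc}(N)$, so ${\rm Ann}_{{\rm End}(N)}(N/{\rm Soc}(N))\neq 0$, contradicting the endo-coprimeness of $N$ guaranteed by (c). Hence $M={\rm Soc}(M)$ is semisimple and, together with Step~1, homogeneous semisimple.

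The main obstacle is Step~2: one must couple a cyclic submodule with its simple quotient in precisely the right way to expose a nontrivial fully invariant direct summand, and the subsequent construction in Step~3 relies both on this nonzero socle and on the uniqueness of the simple from Step~1; the remaining work is routine bookkeeping and an appeal to Proposition \ref{Dedekind}$(1)$.
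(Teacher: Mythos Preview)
Your argument is correct, but the route you take for $(c)\Rightarrow(a)$ is considerably longer than the paper's. The paper never proves $(c)\Rightarrow(a)$ directly; instead it proves $(c)\Rightarrow(d)$ in a single stroke: given a nontrivial fully invariant submodule $K$ of some $N\in\sigma[M]$, the module $N\oplus K$ lies in $\sigma[M]$ and is endo-coprime by hypothesis, so ${\rm End}_{R}(N\oplus K)$ is prime; primeness of this matrix ring forces ${\rm Hom}_{R}(N,K)\neq 0$, i.e.\ ${\rm Ann}_{{\rm End}_{R}(N)}(N/K)\neq 0$, contradicting the endo-coprimeness of $N$ itself. This is precisely dual to the paper's $(b)\Rightarrow(d)$ step (which uses $N\oplus N/K$), and then $(d)\Rightarrow(a)$ follows from $(d)\Rightarrow(b)$ together with $(a)\Leftrightarrow(b)$ from \cite{hag}. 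By contrast, your three-step programme (unique simple, non-zero socle everywhere, semisimplicity of $M$) rebuilds the structure theory from scratch via Proposition~\ref{Dedekind}(1); it is self-contained and arguably more illuminating about \emph{why} homogeneous semisimplicity emerges, but it is substantially more work than the paper's ``apply the hypothesis to a suitably chosen direct sum'' trick. Note also that your explicit proof of $(a)\Rightarrow(d)$ is unnecessary once one has $(a)\Leftrightarrow(b)$ and $(b)\Rightarrow(d)$, the latter being the short dual argument with $N\oplus N/K$.
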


\begin{proof}
$(a)\Leftrightarrow(b)$. {\rm\cite[Proposition 1.15]{hag}}.\\
$(d)\Rightarrow(b)$ and $(d)\Rightarrow(c)$ are obvious.\\
$(b)\Rightarrow(d)$. Let $N\in\sigma[M]$ with $S={\rm End}_{R}(N)$
and $K$ be a nontrivial fully invariant submodule of $N$. Since
$(N/K)\bigoplus N$ is endo-prime, ${\rm End}_{R}((N/K)\bigoplus
N)$ is a prime ring and so ${\rm Ann}_{S}(K)=\pi_{K}\diamond{\rm
Hom}_{R}(N/K,N)\neq0$,
that contradicts endo-primeness of $N$.\\
$(c)\Rightarrow(d)$. Let $N\in\sigma[M]$ with $S={\rm End}_{R}(N)$
and $K$ be a nontrivial fully invariant submodule of $N$. Since
${\rm End}_{R}(N\bigoplus K)$ is a prime ring, $ {\rm
Ann}_{S}(N/K)={\rm Hom}_{R}(N,K)\neq0$, which is a contradiction
with our hypothesis.
\end{proof}

\begin{proposition}\label{sa}
The following statements are equivalent on a ring $R$:\\
$(a)$ $R$ is a simple Artinian ring.\\
$(b)$ Each non-zero left $R$-module is endo-prime.\\
$(c)$ Each non-zero finitely generated left $R$-module is endo-prime.\\
$(d)$ Each non-zero left $R$-module is endo-coprime.\\
$(e)$ Each non-zero finitely generated left $R$-module is endo-coprime.\\
$(f)$ Each non-zero left $R$-module is endo-simple.
\end{proposition}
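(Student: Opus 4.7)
The plan is to pivot on Proposition \ref{semisimple} applied to $M={_{R}R}$. Since $\sigma[{_{R}R}]$ is the whole category of left $R$-modules, that proposition says precisely that each of conditions (b), (d), (f) of the present statement is equivalent to ${_{R}R}$ being homogeneous semisimple, which by Wedderburn--Artin is exactly the assertion that $R$ is simple Artinian. So (a)$\Leftrightarrow$(b)$\Leftrightarrow$(d)$\Leftrightarrow$(f) comes for free, and the implications (b)$\Rightarrow$(c), (d)$\Rightarrow$(e), (f)$\Rightarrow$(b), and (f)$\Rightarrow$(d) are immediate. Only (c)$\Rightarrow$(a) and (e)$\Rightarrow$(a) require extra work.

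For these I imitate the Peirce corner argument used to prove (b)$\Rightarrow$(d) in Proposition \ref{semisimple}, using that the auxiliary modules there remain finitely generated. Assuming (c): for any non-zero finitely generated $N$ with a non-trivial fully invariant submodule $K$, the module $N\oplus(N/K)$ is finitely generated, so endo-prime by (c), so its endomorphism ring is prime. In the block-matrix form of this endomorphism ring with the two projection idempotents, primeness forces the off-diagonal block $\mathrm{Hom}_{R}(N/K,N)$ to be non-zero, and composing a non-zero element with the canonical $\pi_{K}:N\to N/K$ produces a non-zero element of $\mathrm{Ann}_{\mathrm{End}_{R}(N)}(K)$, contradicting endo-primeness of $N$. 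Therefore every finitely generated $R$-module is endo-simple; applied to ${_{R}R}$, whose fully invariant submodules are precisely the two-sided ideals of $R$, this forces $R$ to be a simple ring. To upgrade to Artinianness, fix a maximal left ideal $I\subset R$ and apply the same technique to the finitely generated module $M=R\oplus R/I$: under (c) this is endo-simple by the previous step, under (e) it is endo-coprime by hypothesis and so has prime endomorphism ring. Either way, the non-zero proper submodule $0\oplus R/I$ of $M$ cannot be fully invariant, and a matrix-form computation identifies this with $\mathrm{Hom}_{R}(R/I,R)\ne 0$. Any such non-zero map embeds the simple $R/I$ as a minimal left ideal of $R$, yielding $\mathrm{Soc}({_{R}R})\ne 0$. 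Under (e), simplicity of $R$ needs a separate argument but is direct: since $\mathrm{End}_{R}(R)\cong R$ acts by right multiplication, $\mathrm{Ann}_{\mathrm{End}_{R}(R)}(R/K)=K$ for any proper two-sided ideal $K$, and endo-coprimeness of ${_{R}R}$ forces $K=0$. With $R$ simple and $\mathrm{Soc}({_{R}R})\ne 0$, the socle --- a two-sided ideal of the simple ring $R$ --- equals $R$, so ${_{R}R}$ is semisimple and $R$ is simple Artinian.

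The main obstacle is that (c) and (e) only control finitely generated modules, so Proposition \ref{semisimple} is not directly available; the trick is that the Peirce corner argument from that proposition survives the restriction to finitely generated auxiliaries $N\oplus(N/K)$ and $R\oplus R/I$. A secondary subtlety is that under (c) the endo-primeness of ${_{R}R}$ alone delivers only primeness of $R$, not simplicity, so the intermediate claim that every finitely generated module is endo-simple is genuinely needed; under (e) endo-coprimeness of ${_{R}R}$ yields simplicity of $R$ for free.
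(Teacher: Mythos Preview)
Your proof is correct and follows the same overall architecture as the paper: invoke Proposition~\ref{semisimple} with $M={_RR}$ to get $(a)\Leftrightarrow(b)\Leftrightarrow(d)\Leftrightarrow(f)$, note the trivial implications, and then handle $(c)\Rightarrow(a)$ and $(e)\Rightarrow(a)$ by Peirce-corner arguments on finitely generated auxiliary direct sums.

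The difference is in execution rather than strategy. For $(c)\Rightarrow(a)$ the paper simply cites \cite[Proposition~1.16]{hag}, while you supply a self-contained argument: first upgrade $(c)$ to ``every finitely generated module is endo-simple'' via $N\oplus N/K$, read off simplicity of $R$ from ${_RR}$, and then use $R\oplus R/I$ to produce a minimal left ideal. For $(e)\Rightarrow(a)$ the paper only records that $R$ is prime, shows $\mathrm{Hom}_R(N,R)\neq 0$ and $\mathrm{Hom}_R(R/I,R)\neq 0$ from endo-coprimeness of $N\oplus R$ and $R/I\oplus R$, and again defers to \cite[Proposition~1.16]{hag} for the finish; you instead observe (in line with the paper's own Corollary on ${_RR}$) that endo-coprimeness of ${_RR}$ already forces $R$ simple, and then a single auxiliary $R\oplus R/I$ with $I$ maximal suffices to get $\mathrm{Soc}({_RR})\neq 0$ and hence $\mathrm{Soc}({_RR})=R$. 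Your route is thus more self-contained and slightly more economical (one auxiliary module instead of two, and simplicity rather than merely primeness as the intermediate step), at the cost of writing out what the paper outsources.
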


\begin{proof}
By \ref{semisimple} the statements $(a)$, $(b)$, $(d)$ and $(f)$
are equivalent. \\
$(b)\Rightarrow(c)$ and $(d)\Rightarrow(e)$ are trivial.\\
$(c)\Rightarrow(a)$. See {\rm\cite[Proposition 1.16]{hag}}.\\
$(e)\Rightarrow(a)$. Clearly $R$ must be prime ring. For any
simple module $_{R}N$ and proper left ideal $I$ of $R$, since
$N\bigoplus R$ and $R/I\bigoplus R$ are endo-coprime, their
endomorphism rings are prime and consequently ${\rm Hom}_{R}(N,R)$
and ${\rm Hom}_{R}(R/I,R)$ cannot be zero. The reminder of the
proof is similar to {\rm\cite[Proposition 1.16]{hag}}.
\end{proof}

\section{Duality for Endo-Prime and Endo-Coprime Modules}

$~~~~~$In this section, we shall investigate $R$-duals. If $M$ is
an $R$-module, then the $R$-dual of $M$ is ${\rm Hom}_{R}(M,R)$,
and this will be denoted by $M^*$. Notice that $M^*$ is a right
$R$-module and so its endomorphism ring ${\rm End}_{R}(M^{*})$
will act on the left side of $M^{*}$. The double dual of $M$, that
is, $({M^*})^*$, will be denoted by $M^{**}$. If the natural map
$\iota:M\rightarrow M^{**}$ given by
$$[(m)\iota](f)=(m)f~~~(m\in M, f\in M^*),$$ is injective, $M$
will be called {\it torsionless}. It is clear that $M$ is
torsionless if and only if the reject ${\rm Rej}(M,R)$ is zero. A
torsionless module $_{R}M$ is said to be {\it reflexive} if the
injection map $\iota$ is in fact an isomorphism (of left
R-modules). For any submodule $K\subseteq M$, put
$K^{\bot}:=\{f\in M^{*}\mid(K)f=0\}$, which is a submodule of
$M^{*}$. Similarly, for any submodule $I\subseteq M^{*}$, put
$I^{\bot}:=\bigcap\{{\rm Ker}~f\mid f\in I\}$ (a submodule of
$M$). We recall that, for any $f\in{\rm End}_{R}(M)$, the map
$f^{*}:M^{*}\rightarrow M^{*}$ given by $f^{*}(g)=fg$ is a right
$R$-module homomorphism. If $M$ is a reflexive R-module, then the
mapping $()^{*}:{\rm End}_{R}(M)\rightarrow{\rm End}_{R}(M^{*})$
is a ring anti-isomorphism (see {\rm\cite[Proposition 2.2]{cor}}).
We recall from {\cite[Theorem 15.11]{lam}} that, over
quasi-Frobenius (QF) ring $R$ any finitely generated module
$_{R}M$ is reflexive. Furthermore, under this assumption, for any
submodules $K\subseteq M$ and $I\subseteq M^{*}$ we have
$K^{\bot\bot}=K$ and $I^{\bot\bot}=I$. The reader is referred to
Lam \cite{lam} and Wisbauer \cite{wis1}.

\begin{theorem}
Let $M$ be a finitely generated left module over $QF$ ring $R$. Then the following statements hold:\\
$(1)$ $_{R}M$ is endo-prime if and only if $M^{*}$ is endo-coprime (as a right $R$-module).\\
$(2)$ $_{R}M$ is endo-coprime if and only if $M^{*}$ is endo-prime
(as a right $R$-module).
\end{theorem}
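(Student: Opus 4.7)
The plan is to transport endo-prime/endo-coprime along the $R$-duality. Because $R$ is QF and $M$ is finitely generated, $M$ is reflexive, the assignments $K\mapsto K^{\bot}$ and $I\mapsto I^{\bot}$ are mutually inverse, order-reversing bijections between submodules of ${}_{R}M$ and submodules of $M^{*}_{R}$, and the map
\[
(\cdot)^{*}\colon S={\rm End}_{R}(M)\longrightarrow T={\rm End}_{R}(M^{*}),\qquad f\mapsto f^{*},\quad f^{*}(g)=g\circ f,
\]
is a ring anti-isomorphism. Under $\bot$, non-zero submodules $K\subseteq M$ correspond exactly to proper submodules $L\subseteq M^{*}$ and vice versa, since $0^{\bot}=M^{*}$ and, by torsionlessness of $M$, $M^{\bot}=0$.

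The first substantive step is to check that $\bot$ restricts to a bijection between \emph{fully invariant} submodules. If $K$ is fully invariant in $M$ and $g\in K^{\bot}$, then for every $f\in S$ and every $k\in K$ one has $(f^{*}(g))(k)=g(f(k))=0$ because $f(k)\in K$; since $T=\{f^{*}\mid f\in S\}$, this shows $K^{\bot}$ is fully invariant in $M^{*}$. The reverse direction is symmetric: for $L$ fully invariant in $M^{*}$, $f\in S$, $m\in L^{\bot}$ and $g\in L$, one has $g(f(m))=(f^{*}(g))(m)=0$ because $f^{*}(g)\in L$. Combined with $K^{\bot\bot}=K$ and $I^{\bot\bot}=I$, this yields the desired bijection of fully invariant submodules.

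The heart of the proof is the annihilator translation. Fix a fully invariant $K\subseteq M$, set $L=K^{\bot}$, and take $f\in S$. Reflexivity gives
\[
f\in{\rm Ann}_{S}(K)\;\Leftrightarrow\;f(K)=0\;\Leftrightarrow\;g(f(K))=0\text{ for all }g\in M^{*}\;\Leftrightarrow\;f^{*}(M^{*})\subseteq L,
\]
and the last condition is exactly $f^{*}\in{\rm Ann}_{T}(M^{*}/L)$. Analogously,
\[
f\in{\rm Ann}_{S}(M/K)\;\Leftrightarrow\;f(M)\subseteq K\;\Leftrightarrow\;g(f(M))=0\text{ for all }g\in L\;\Leftrightarrow\;f^{*}g=0\text{ for all }g\in L,
\]
i.e.\ $f^{*}\in{\rm Ann}_{T}(L)$. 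Because $(\cdot)^{*}$ is a bijection of sets, we conclude ${\rm Ann}_{S}(K)=0\Leftrightarrow{\rm Ann}_{T}(M^{*}/L)=0$ and ${\rm Ann}_{S}(M/K)=0\Leftrightarrow{\rm Ann}_{T}(L)=0$.

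Combining these equivalences with the bijection of fully invariant submodules gives both statements at once: $M$ is endo-prime iff the first annihilator vanishes for every non-zero f.i.\ $K\subseteq M$, which is precisely the endo-coprime condition on $M^{*}$ quantified over every proper f.i.\ $L\subseteq M^{*}$, yielding (1); the second equivalence symmetrically matches endo-coprime on $M$ with endo-prime on $M^{*}$, yielding (2). The one genuinely delicate point I expect is the careful bookkeeping of sides — $S$ acts on the right of $M$ while $T$ acts on the left of $M^{*}$ — but this is exactly what the anti-isomorphism $(\cdot)^{*}$ is designed to absorb; once the sides are lined up, the whole argument reduces to a routine transport along the $\bot$-duality.
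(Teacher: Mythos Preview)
Your proposal is correct and follows essentially the same approach as the paper: both use reflexivity and the double-annihilator identities over a QF ring to pass between fully invariant submodules of $M$ and of $M^{*}$ via $\bot$, and both translate the relevant annihilator conditions through the anti-isomorphism $(\cdot)^{*}$. The paper carries out the four implications separately while you package them into two clean equivalences ${\rm Ann}_{S}(K)=0\Leftrightarrow{\rm Ann}_{T}(M^{*}/K^{\bot})=0$ and ${\rm Ann}_{S}(M/K)=0\Leftrightarrow{\rm Ann}_{T}(K^{\bot})=0$, but the underlying computations are identical.
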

\begin{proof}
$(1)$ $(\Rightarrow)$. Let $I$ be a proper fully invariant
submodule of $M^*$. If $I^{\bot}=0$, then $I=I^{\bot\bot}=M^{*}$,
which contradicts our assumption. So $I^{\bot}$ is a non-zero
fully invariant submodule of $M$. Let $f^{*}\in{\rm
End}_{R}(M^{*})$ with $f^{*}(M^{*})\subseteq I$. Then for any
$g\in M^{*}$, we have $fg=f^{*}(g)\in I$. Hence
$(I^{\bot})f\subseteq {\rm Rej}(M,R)=0$. Consequently from
endo-primeness of $M$ we deduce that $f=0$ and
so $f^{*}=0$.\\
$(\Leftarrow)$. Let $K$ be a non-zero fully invariant submodule of
$M$. Then for any $f^{*}\in{\rm End}_{R}(M^*)$ and any $g\in
K^{\bot}$ we have $(K)f^{*}(g)=(K)fg\subseteq(K)g=0$. Thus
$K^{\bot}$ is a proper fully invariant submodule of $M^*$. Now let
$h\in{\rm End}_{R}(M)$ with $(K)h=0$. Then for any $g\in M^{*}$,
$(K)h^{*}(g)=(K)hg=0$, i.e., $h^{*}(M^{*})\subseteq K^{\bot}$.
Consequently endo-coprimeness of $M^{*}$ implies that $h^{*}=0$.
Thus ${\rm Im}~h\subseteq{\rm Rej}(M,R)=0$.\\
$(2)$ $(\Rightarrow)$. Let $I$ be a non-zero fully invariant
submodule of $M^*$ and $f^{*}\in{\rm End}_{R}(M^*)$ such that
$f^{*}(I)=0$. Then for any $g\in I$, $(M)f\subseteq{\rm Ker}~g$,
i.e., $(M)f\subseteq I^{\bot }$. Thus $f=0$, because $M$ is
endo-coprime and $I^{\bot}$ is a proper fully invariant submodule of $M$.\\
$(\Leftarrow)$. Now assume that $M^{*}$ is endo-prime and $K$ is a
proper fully invariant submodule of $M$. If $K^{\bot}=0$, then
$K=K^{\bot\bot}=M$, a contradiction. Hence $K^{\bot}$ is a
non-zero fully invariant submodule of $M^{*}$. Let $f\in{\rm
End}_{R}(M)$ with $(M)f\subseteq K$, then for any $g\in K^{\bot}$,
$(M)f^{*}(g)=(M)fg\subseteq(K)g=0$. Therefore $f^{*}(K^{\bot})=0$
and so $f^{*}=0$, because $M^{*}$ is endo-prime. Since $M$ is
torsionless $f$ must be zero.
\end{proof}


\end{document}